\documentclass{amsart}

\usepackage[T1]{fontenc}
\usepackage[utf8]{inputenc}
\usepackage{lmodern}
\usepackage{microtype}
\usepackage{amsmath,amssymb,amsthm,mathtools}
\usepackage{mathrsfs}
\usepackage{tikz-cd}
\usepackage[hidelinks]{hyperref}
\usepackage{enumitem}

\newtheorem{theorem}{Theorem}[section]
\newtheorem{proposition}[theorem]{Proposition}
\newtheorem{lemma}[theorem]{Lemma}
\newtheorem{corollary}[theorem]{Corollary}
\theoremstyle{definition}
\newtheorem{definition}[theorem]{Definition}
\theoremstyle{remark}
\newtheorem{remark}[theorem]{Remark}

\newcommand{\Pp}{\mathbf P}
\newcommand{\Gr}{\operatorname{Gr}}
\newcommand{\Flag}{\operatorname{Flag}}
\newcommand{\LG}{\operatorname{LG}}
\newcommand{\GL}{\operatorname{GL}}
\newcommand{\PGL}{\operatorname{PGL}}
\newcommand{\Sp}{\operatorname{Sp}}
\newcommand{\GSp}{\operatorname{GSp}}
\newcommand{\PSp}{\operatorname{PSp}}
\newcommand{\PGSp}{\operatorname{PGSp}}
\newcommand{\Spin}{\operatorname{Spin}}
\newcommand{\Aut}{\operatorname{Aut}}
\newcommand{\an}{\mathrm{an}}
\newcommand{\OO}{\mathcal O}
\newcommand{\Dcal}{\mathcal D}
\newcommand{\Fcal}{\mathcal F}
\newcommand{\Mcal}{\mathcal M}
\newcommand{\Tcal}{\mathcal T}

\title[Non-Archimedean Penrose correspondence]
{The Penrose Correspondence over a Non-Archimedean Field}

\author{Alberto Verjovsky}

\begin{document}

\begin{abstract}
Let \(K\) be a non-Archimedean local field of characteristic different
from \(2\), and let \(V\) have dimension four over \(K\). The Drinfeld upper half-spaces of \(V\) and \(\Lambda^2V\) define
compatible open analytic subsets of \(\Pp(V)\), \(\Gr(2,V)\), and
\(\Flag(1,2;V)\). Both restricted incidence projections are smooth and
surjective. For every torsion-free discrete subgroup of \(\PGL(V)(K)\),
the three actions are free and properly discontinuous, and the
incidence correspondence descends to smooth analytic quotients.
These domains contain an explicit flag defined over an unramified
extension of degree seven. A symplectic form on \(V\) equips the projective domain
with a contact structure and places the Grassmannian domain in
\(\Sp_4/(\Sp_2\times\Sp_2)\). For torsion-free discrete symplectic groups, this contact structure
descends to the quotient. Free discrete symplectic groups of every finite
rank at least two give contact threefolds whose underlying Berkovich
spaces have the homotopy type of a bouquet of circles. Every quotient
of the projective domain by a torsion-free discrete group preserving
the contact distribution is noncompact. For every torsion-free discrete
projective linear group, the flag quotient and all fibers of its two
incidence projections are noncompact; both projections are nonproper.

\end{abstract}

\subjclass[2020]{Primary 14G22; Secondary 14M15, 20G25, 53D10}

\keywords{Penrose correspondence, Berkovich analytification, Klein quadric,
flag variety, contact structure, spin group, Drinfeld upper half-space,
free discrete subgroup, Bruhat--Tits building}

\maketitle

\begin{center}
\emph{To the memory of James Eells (1926--2007)}
\end{center}

\section{Introduction}

The smooth Penrose fibration \(\mathbf{CP}^3\to S^4\) is defined using
a quaternionic structure on \(\mathbf C^4\); see
\cite{Penrose,WardWells}. Its algebraic incidence diagram is
\[
\Pp(V)\xleftarrow{\,p\,}\Flag(1,2;V)\xrightarrow{\,q\,}\Gr(2,V),
\qquad \dim V=4.
\]
Here a flag is a pair \(L\subset W\subset V\) with
\(\dim L=1\) and \(\dim W=2\); the projections forget \(W\) and
\(L\), respectively. The Grassmannian is the Klein quadric in
\(\Pp(\Lambda^2V)\), and the fibers of \(p\) and \(q\) are
\(\Pp^2\) and \(\Pp^1\). Over \(\mathbf C\), the quaternionic lines
form a real submanifold of this Grassmannian isomorphic to \(S^4\).
Restricting the incidence diagram to that real submanifold recovers the
smooth twistor fibration. Eells and Salamon describe this bundle and
the Gauss lifts of maps from surfaces in
\cite[\S\S2, 4, 9]{EellsSalamon}.

The author's joint work with Jos\'e Seade on discrete actions on complex
projective spaces and complex Schottky groups
\cite{SeadeVerjovskyActions,SeadeVerjovskyKleinian,SeadeVerjovsky}
provides a geometric motivation for studying quotients of this diagram.
In particular, \cite{SeadeVerjovskyKleinian} uses twistor fibrations to
lift higher-dimensional Kleinian actions. Over a non-Archimedean local
field, Drinfeld upper half-spaces provide domains on which torsion-free
discrete projective linear groups act freely and properly
discontinuously. For the incidence diagram, the domains must also be compatible with
both projections and preserve their surjectivity.

Let \(K\) be a non-Archimedean local field with
\(\operatorname{char}K\ne2\): a complete discretely valued field
with nontrivial valuation and finite residue field. For a finite-dimensional
\(K\)-vector space \(E\), set
\[
\Omega_{\mathrm{Dr}}(E)=\Pp(E)^{\an}\setminus
\bigcup_{\substack{H\subset E\\\dim H=\dim E-1}}\Pp(H)^{\an}.
\]
All hyperplanes in this union are defined over \(K\), and \(\an\)
denotes Berkovich analytification. This is the Drinfeld upper half-space,
an open analytic subset of dimension \(\dim E-1\). If \(L/K\) is a
complete extension and \([a_1:\cdots:a_d]\in\Pp(E)(L)\) is written
in a \(K\)-basis, it belongs to \(\Omega_{\mathrm{Dr}}(E)\) precisely
when \(a_1,\ldots,a_d\) are linearly independent over \(K\).

Put \(\Tcal=\Pp(V)\), \(\Mcal=\Gr(2,V)\), and
\(\Fcal=\Flag(1,2;V)\).
\begin{theorem}[Incidence domains and quotients]\label{thm:D}
Let \(\dim_K V=4\). Define
\[
\Omega_{\Tcal}=\Omega_{\mathrm{Dr}}(V),\qquad
\Omega_{\Mcal}=\Mcal^{\an}\cap\Omega_{\mathrm{Dr}}(\Lambda^2V),
\qquad
\Omega_{\Fcal}=(p^{\an})^{-1}(\Omega_{\Tcal})
\cap(q^{\an})^{-1}(\Omega_{\Mcal}).
\]
These are nonempty open analytic subsets of \(\Tcal^{\an}\),
\(\Mcal^{\an}\), and \(\Fcal^{\an}\), of dimensions \(3,4,5\).
The restricted maps
\[
p:\Omega_{\Fcal}\longrightarrow\Omega_{\Tcal},\qquad
q:\Omega_{\Fcal}\longrightarrow\Omega_{\Mcal}
\]
are smooth and surjective, of relative dimensions \(2\) and \(1\).
For every torsion-free discrete subgroup
\(\Gamma\subset\PGL(V)(K)\), the three actions are free and properly
discontinuous. Their smooth analytic quotients exist, and the induced maps
\[
\Gamma\backslash\Omega_{\Tcal}
\xleftarrow{\ \bar p\ }\Gamma\backslash\Omega_{\Fcal}
\xrightarrow{\ \bar q\ }\Gamma\backslash\Omega_{\Mcal}
\]
are smooth and surjective, with the same relative dimensions.
\end{theorem}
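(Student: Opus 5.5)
The plan is to reduce everything to two facts: linear-algebraic properties of \(K\)-rational subspaces, and the standard theory of the Drinfeld upper half-space.

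\emph{Openness, dimension, smoothness.} The hyperplanes \(H\subset E\) are parametrized by the compact set \(\Pp(E^*)(K)\). So the union \(\bigcup_H\Pp(H)^{\an}\) is the image of a closed subset of \(\Pp(E^*)(K)\times\Pp(E)^{\an}\) under a proper projection, hence closed. Therefore \(\Omega_{\mathrm{Dr}}(E)\) is open, of dimension \(\dim E-1\) once nonempty. It follows that \(\Omega_{\Tcal}\), \(\Omega_{\Mcal}\) and \(\Omega_{\Fcal}\) are open in \(\Tcal^{\an}\), \(\Mcal^{\an}\) and \(\Fcal^{\an}\). Since \(p\) and \(q\) are Zariski-locally trivial bundles with fibers \(\Pp^2\) and \(\Pp^1\), their analytifications are smooth of relative dimensions \(2\) and \(1\). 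Smoothness is preserved under restriction to open subsets. Nonemptiness of \(\Omega_{\Fcal}\) follows from surjectivity of \(p\) onto \(\Omega_{\Tcal}\neq\emptyset\). Alternatively, it follows from the explicit degree-seven flag mentioned in the abstract. Given nonemptiness, the dimensions \(3,4,5\) follow.

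\emph{Surjectivity; this is the main step.} For \(p\), fix \(x\in\Omega_{\Tcal}\) with completed residue field \(\mathscr H(x)\), represented by a vector \(v\) whose coordinates are \(K\)-independent. The fiber \(p^{-1}(x)\) is \(\Pp(V_{\mathscr H(x)}/\langle v\rangle)\cong\Pp^2_{\mathscr H(x)}\), and the point \(\langle v,w\rangle\) lies over \(\Omega_{\Mcal}\) iff \(\varphi(v\wedge w)\ne0\) for every nonzero \(K\)-rational \(\varphi\in\Lambda^2V^*\). For fixed \(\varphi\), the map \(w\mapsto\varphi(v\wedge w)\) is linear. It is identically zero only if \(v\) lies in the radical of the alternating form \(\varphi\). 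That radical is a proper \(K\)-rational subspace, which contradicts \(x\in\Omega_{\Dr}\)-membership, i.e.\ \(x\in\Omega_{\Tcal}\). Hence the bad locus in the fiber is a union of hyperplanes indexed by the compact set \(\Pp(\Lambda^2V^*)(K)\). Now choose a point of \(\Pp^2_{\mathscr H(x)}\) with values in a large complete extension whose affine coordinates are algebraically independent over \(\mathscr H(x)\), for instance a Gauss-type point. Such a point avoids every hyperplane defined over \(\mathscr H(x)\), so \(p\) is surjective onto \(\Omega_{\Tcal}\).

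For \(q\), the argument is the same. Fix \(W\in\Omega_{\Mcal}\). A line \(L\subset W\) is bad iff \(L\subset H\) for some \(K\)-rational hyperplane \(H\subset V\). We cannot have \(W\subset H\): then the Plücker vector of \(W\) would lie in the proper \(K\)-subspace \(\Lambda^2H\), contradicting \(W\in\Omega_{\Mcal}\). So each \(H\) meets \(\Pp(W)\cong\Pp^1\) in a single point, and a transcendental point of \(\Pp^1\) avoids all of them.

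\emph{Group actions and quotients.} For torsion-free discrete \(\Gamma\subset\PGL(V)(K)\), I will invoke the standard result that \(\Gamma\) acts properly discontinuously on \(\Omega_{\mathrm{Dr}}(V)\); one proof uses the reduction map to the Bruhat--Tits building. Stabilizers are then compact and discrete, hence finite, hence trivial by torsion-freeness, so the action is free. For \(\Omega_{\Mcal}\), note that \(\Lambda^2:\PGL(V)\to\PGL(\Lambda^2V)\) is injective, because \(\Lambda^2g\) scalar forces \(g\) scalar. It is also a closed immersion of algebraic groups. So \(\Lambda^2\Gamma\) is discrete and torsion-free in \(\PGL(\Lambda^2V)(K)\), and acts freely and properly discontinuously on \(\Omega_{\mathrm{Dr}}(\Lambda^2V)\). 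The same then holds on the closed invariant subset \(\Omega_{\Mcal}\). On \(\Omega_{\Fcal}\), the \(\Gamma\)-equivariant map \(p\) transfers freeness and proper discontinuity from \(\Omega_{\Tcal}\).

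Free properly discontinuous actions on good analytic spaces have quotients obtained by gluing local slices. The quotient maps are local isomorphisms, so the quotients are smooth of the same dimensions. Finally, \(p\) and \(q\) are equivariant, so they descend to \(\bar p\) and \(\bar q\). Smoothness and surjectivity are local on the target and are checked through the étale quotient maps, so both descend with the same relative dimensions.
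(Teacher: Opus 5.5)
Your proposal is correct and follows essentially the same route as the paper. Surjectivity comes from Gauss-type generic points in the fibers $\Pp(V_F/L_z)$ and $\Pp(W_x)$ avoiding every hyperplane cut out by $K$-rational data, using the radical of $\lambda$ for $p$ and $\Lambda^2H$ for $q$; discreteness transfers to $\PGL(\Lambda^2V)(K)$ via the exterior-square closed immersion; freeness and properness transfer to $\Omega_{\Fcal}$ via equivariance of $p$; and the quotients are glued from local slices. The differences are cosmetic: you prove openness of $\Omega_{\mathrm{Dr}}(E)$ via compactness of $\Pp(E^*)(K)$ where the paper takes it as known, and you get $\Omega_{\Mcal}\neq\varnothing$ from surjectivity of $p$ rather than from a generic point of the Klein quadric. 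You leave $\Omega_{\Tcal}\neq\varnothing$ unjustified, but it follows from the same Gauss-point device applied to $\Pp(V)$, as in the paper.
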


A nondegenerate alternating form \(\omega\) on \(V\) defines a
contact distribution on \(\Pp(V)\). Write
\[
\Mcal^\circ=\{W\in\Gr(2,V):\omega|_W\text{ is nondegenerate}\}.
\]
The projective contact distribution \(\mathcal H\subset T_{\Pp(V)}\)
has fiber
\(\mathcal H_{[L]}=\operatorname{Hom}(L,L^{\perp_\omega}/L)\),
where \(L^{\perp_\omega}=\{v\in V:\omega(v,L)=0\}\).
A contact distribution on a smooth threefold is a rank-two subbundle
locally given by \(\ker\theta\), where \(\theta\wedge d\theta\)
is nowhere zero.

\begin{corollary}[Symplectic quotients]\label{thm:C}
For every torsion-free discrete subgroup \(\Gamma\) of the projective
image of \(\Sp(V,\omega)(K)\), the quotient
\(Y_\Gamma=\Gamma\backslash\Omega_{\Tcal}\) carries the descended
contact distribution. Moreover,
\[
\Omega_{\Mcal}\subset(\Mcal^\circ)^{\an},\qquad
\Mcal^\circ\simeq\Sp_4/(\Sp_2\times\Sp_2)
\]
as \(K\)-varieties. Thus the right-hand quotient in
Theorem~\ref{thm:D} is a quotient of a domain in this homogeneous space.
For every \(r\ge2\), such a group can be chosen isomorphic to \(F_r\), the free group on
\(r\) generators. The underlying Berkovich space of \(Y_\Gamma\) then has
the homotopy type of a bouquet of \(r\) circles; in particular,
\[
\pi_1(Y_\Gamma)\simeq F_r,\qquad
H_1(Y_\Gamma,\mathbf Z)\simeq\mathbf Z^r.
\]
Here \(\pi_1\) and \(H_1\) are the ordinary topological fundamental
group and singular homology.
\end{corollary}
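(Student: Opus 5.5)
The plan has four parts: descend the contact distribution, prove \(\Omega_{\Mcal}\subset(\Mcal^\circ)^{\an}\) and identify \(\Mcal^\circ\), construct free subgroups, and compute the homotopy type.

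\emph{Contact structure.} The distribution \(\mathcal H\) is defined intrinsically from \(\omega\). So every \(g\in\Sp(V,\omega)(K)\) maps \(\mathcal H_{[L]}\) to \(\mathcal H_{[gL]}\), and scalars act trivially. Hence \(\mathcal H|_{\Omega_{\Tcal}}\) is invariant under the projective image of \(\Sp(V,\omega)(K)\). By Theorem~\ref{thm:D}, \(\Gamma\) acts freely and properly discontinuously, and \(\Omega_{\Tcal}\to Y_\Gamma\) is a local analytic isomorphism. Therefore \(\mathcal H\) descends, and the condition \(\theta\wedge d\theta\neq0\) is local, so it is inherited. That \(\mathcal H\) is contact on \(\Pp(V)\) is the standard local computation. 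In Darboux coordinates \(\omega=e_1^*\wedge e_3^*+e_2^*\wedge e_4^*\), on the chart \(x_1=1\), the distribution is \(\ker(dx_3+x_2dx_4-x_4dx_2)\), and \(\theta\wedge d\theta=2\,dx_3\wedge dx_2\wedge dx_4\neq0\).

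\emph{Inclusion and homogeneous space.} View \(\omega\) as a nonzero \(K\)-rational linear form \(\lambda_\omega\) on \(\Lambda^2V\). For \(W=\langle u,v\rangle\), \(\omega|_W\) is degenerate exactly when \(\lambda_\omega(u\wedge v)=0\), i.e. when the Plücker point lies on the \(K\)-rational hyperplane \(\ker\lambda_\omega\subset\Lambda^2V\). By the definition of \(\Omega_{\mathrm{Dr}}(\Lambda^2V)\), such points are excluded, which gives \(\Omega_{\Mcal}\subset(\Mcal^\circ)^{\an}\). Next, \(\Sp(V,\omega)\) acts transitively on \(\Mcal^\circ\): choose a symplectic basis of \(W\) and extend it by a symplectic basis of \(W^{\perp_\omega}\) (Witt). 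The stabilizer of \(W\) is \(\Sp(W)\times\Sp(W^{\perp_\omega})\simeq\Sp_2\times\Sp_2\), since \(V=W\oplus W^{\perp_\omega}\). The orbit map \(\Sp_4/(\Sp_2\times\Sp_2)\to\Mcal^\circ\) is then a bijective morphism onto a smooth variety. Since \(\operatorname{char}K\ne2\), it is separable, because the stabilizer is smooth and the orbit map is smooth; Zariski's main theorem then makes it an isomorphism. The claim about the right-hand quotient follows immediately.

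\emph{Free subgroups.} Take two hyperbolic elements \(g,h\in\Sp_4(K)\), for instance diagonalizable with eigenvalues \(\varpi^{\pm1},\varpi^{\pm2}\). Conjugate them to be in general position, so that their attracting and repelling subspaces in \(\Pp(V)\) are pairwise transverse. A ping-pong argument in \(\Pp(V)(K)\), applied to large powers \(g^N,h^N\), gives a free group \(F_2\) that is discrete, being generated by contracting elements, and torsion-free, since free groups are. It embeds in the projective image because \(-1\notin\langle g^N,h^N\rangle\). The free group \(F_2\) contains \(F_r\) for every \(r\ge2\) as a finite-index subgroup, which is still discrete and torsion-free. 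An alternative is to take a Schottky subgroup of \(\Sp_4(K)\) acting on the Bruhat--Tits building, via a Ihara-type theorem for cocompact lattices; the ping-pong route is more elementary.

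\emph{Homotopy type.} This is the main obstacle. Use the Berkovich retraction \(\tau:\Omega_{\mathrm{Dr}}(V)\to\mathcal B\) onto the Bruhat--Tits building of \(\PGL(V)\). It is \(\PGL(V)(K)\)-equivariant and a strong deformation retraction (Berkovich), and \(\mathcal B\) is contractible. Equivariance lets the retraction descend, so \(Y_\Gamma\simeq\Gamma\backslash\mathcal B\). Since \(\Gamma\) acts freely on \(\mathcal B\): a torsion-free discrete group has finite stabilizers, hence trivial ones. Also \(\mathcal B\) is a contractible CW complex with a proper cellular action, after subdividing so that no cell is mapped to itself nontrivially. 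Hence \(\Gamma\backslash\mathcal B\) is a \(K(F_r,1)\), and any CW \(K(F_r,1)\) is homotopy equivalent to a bouquet of \(r\) circles. This gives \(\pi_1\simeq F_r\) and \(H_1\simeq\mathbf Z^r\). The delicate points are checking that the deformation retraction is equivariant and passes to the quotient topology, and that the action on \(\mathcal B\) is free and properly discontinuous. For the latter I would argue that the stabilizers of facets are compact open, so their intersection with the discrete group \(\Gamma\) is finite, and therefore trivial because \(\Gamma\) is torsion-free.
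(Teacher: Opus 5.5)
Your proposal is correct and follows the paper's proof in each part: the descent of \(\mathcal H\) through local quotient charts, the \(K\)-rational hyperplane \(\ker c_\omega\) giving \(\Omega_{\Mcal}\subset(\Mcal^\circ)^{\an}\), transitivity with stabilizer \(\Sp(W)\times\Sp(W^{\perp_\omega})\), projective ping--pong, and Berkovich's equivariant retraction onto the building, which makes \(\Gamma\backslash\mathscr B\) a \(K(F_r,1)\). The one real variation is that you obtain \(F_r\) as an index-\((r-1)\) subgroup of a ping--pong \(F_2\) (Nielsen--Schreier), whereas the paper plays ping--pong directly with \(r\) symplectic conjugates of \(\operatorname{diag}(\pi^{-3},\pi^{-1},\pi,\pi^{3})\) placed in general position; both work, but discreteness should be justified as in Lemma~\ref{lem:pingpong}, by an open set \(U\) with \(w(U)\cap U=\varnothing\) for every nontrivial reduced word \(w\), and not by the generators being contracting.
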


Let \(H_\omega\subset\PGL(V)(K)\) be the stabilizer of
\([\omega]\in\Pp(\Lambda^2V^*)(K)\), where
\(g\cdot[\beta]=[(g^{-1})^*\beta]\). It is the projective image of
\(\GSp(V,\omega)(K)\), and Proposition~\ref{prop:symmetry} identifies
it with the group preserving \(\mathcal H\). A subgroup \(\Gamma\)
of a locally compact group \(G\) is called cocompact when
\(\Gamma\backslash G\) is compact.

\begin{theorem}[Noncompactness]\label{thm:N}
\begin{enumerate}[label=\textup{(\alph*)}]
\item For every torsion-free discrete \(\Gamma\subset H_\omega\),
the contact quotient \(Y_\Gamma\) is noncompact.
\item Every cocompact subgroup of \(\PGL(V)(K)\) contains an element
that fails to preserve \(\mathcal H\). For a torsion-free discrete
cocompact subgroup, the restriction \(\mathcal H|_{\Omega_{\Tcal}}\)
therefore does not descend to \(Y_\Gamma\).
\item For every torsion-free discrete \(\Gamma\subset\PGL(V)(K)\),
each fiber of \(\bar p\) and \(\bar q\) is closed and noncompact.
Consequently \(\Gamma\backslash\Omega_{\Fcal}\) is noncompact,
and both \(\bar p\) and \(\bar q\) are nonproper.
\end{enumerate}
\end{theorem}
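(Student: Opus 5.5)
The three parts rest on one basic fact. By Theorem~\ref{thm:D}, for torsion-free discrete \(\Gamma\) the quotient map \(\Omega\to\Gamma\backslash\Omega\) is a local isomorphism and a topological covering of Berkovich spaces. So a subset of the quotient is compact iff it is the image of a compact subset of the domain, and a closed subspace of the quotient is compact iff some compact subset of the domain surjects onto it. I would prove each noncompactness claim in the form: some closed subset \(Z\) of the domain is \(\Gamma\)-invariant up to translates, and \(Z\) leaves every compact set in a way the translates cannot undo.

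\textbf{(a)} Since \(\Gamma\subset H_\omega\) is discrete in \(\PGL(V)(K)\), it is discrete in the image of \(\GSp(V,\omega)(K)\). This group has \(K\)-rank \(2\) modulo its center, but it acts on the Bruhat--Tits building of \(\PGL_4\) only through the rank-\(2\) symplectic sub-building. I would use the retraction \(\tau:\Omega_{\Tcal}\to\mathcal B(\PGL_4)\) (Berkovich's skeleton), which is proper and \(\PGL(V)(K)\)-equivariant. The building \(\mathcal B(\PGL_4)\) has dimension \(3\), while the \(\GSp\)-orbit of any point stays within bounded distance of the dimension-\(2\) symplectic sub-building \(\mathcal B_\omega\). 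Suppose \(Y_\Gamma\) were compact. Then \(\Omega_{\Tcal}=\Gamma\cdot C\) for some compact \(C\), so \(\mathcal B(\PGL_4)=\Gamma\cdot\tau(C)\) would lie in a bounded neighbourhood of \(\mathcal B_\omega\). This is false: an apartment direction transverse to the symplectic apartment, for example the cocharacter \(\operatorname{diag}(\varpi^n,1,1,1)\), which is not in \(\GSp\), gives points at unbounded distance. This step is the main obstacle. It needs the equivariant proper retraction onto the building, and a clean argument that \(\GSp\)-orbits stay near \(\mathcal B_\omega\). For the latter I would use the Cartan decomposition \(\GSp=K_\omega A_\omega K_\omega\) and the fact that \(A_\omega\) fixes \(\mathcal B_\omega\) setwise.

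\textbf{(b)} By Proposition~\ref{prop:symmetry}, the stabilizer of \(\mathcal H\) is \(H_\omega\), a closed subgroup. If a cocompact \(\Gamma\) lay in \(H_\omega\), then \(H_\omega\backslash\PGL(V)(K)\) would be a continuous image of the compact space \(\Gamma\backslash\PGL(V)(K)\), hence compact. But \(\PGL/H_\omega\) is the orbit of \([\omega]\) in \(\Pp(\Lambda^2V^*)(K)\), that is, the open set of nondegenerate forms. This set is not closed: the boundary contains degenerate forms, which are limits \(g_n\cdot[\omega]\) with \(g_n=\operatorname{diag}(\varpi^n,1,1,1)\). So it is noncompact, a contradiction. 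For the descent claim, let \(\gamma\in\Gamma\) be an element not preserving \(\mathcal H\). The distributions \(\gamma_*\mathcal H\) and \(\mathcal H\) are algebraic and differ somewhere, hence differ on a dense Zariski-open set, which meets the nonempty open set \(\Omega_{\Tcal}\). Therefore \(\mathcal H|_{\Omega_{\Tcal}}\) is not \(\Gamma\)-invariant and does not descend.

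\textbf{(c)} The fibers of \(\bar p\) and \(\bar q\) are closed as preimages of points. Fix \(x\in\Omega_{\Tcal}\). Freeness and proper discontinuity of the action (Theorem~\ref{thm:D}) imply that the fiber \(\bar p^{-1}(\bar x)\) is isomorphic to \(p^{-1}(x)\cap\Omega_{\Fcal}\), which is an open subset of \(p^{-1}(x)^{\an}\simeq(\Pp^2)^{\an}\). This subset is proper: the plane \(\Pp(V/L)\) contains \(K\)-rational points of \(\Pp(\Lambda^2 V)\)-hyperplane sections, for instance those \(W\) meeting a fixed \(K\)-rational \(2\)-plane, and these are excluded. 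A nonempty proper open subset of the connected compact space \((\Pp^2)^{\an}\) is not closed, hence not compact. The same argument works for \(q\) with \(\Pp^1\). Here one checks that \(q^{-1}(W)\cap p^{-1}(\Omega_{\Tcal})\) omits the \(K\)-rational hyperplane sections \(\Pp(H\cap W)\). Noncompactness of \(\Gamma\backslash\Omega_{\Fcal}\) follows because a closed noncompact subset of a compact space is impossible. Nonproperness follows because the fiber over a point, which is a compact set, is noncompact.
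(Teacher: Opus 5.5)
Your proposal is correct. Parts (b) and (c) follow the paper essentially step for step. Your $\operatorname{diag}(\varpi^n,1,1,1)\cdot[\omega]$ is the paper's $[\omega_n]\to[e_1^*\wedge e_4^*]$, and your fiber argument is the paper's. The one difference is that the paper proves openness of $p^{-1}(z)\to\bar p^{-1}(\bar z)$, by splitting $p^{-1}(\Gamma z)$ into open-closed pieces, where you only assert it.

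Part (a) takes a genuinely different route.

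The paper first shows that compactness of $\Gamma\backslash\Omega_{\mathrm{Dr}}(V)$ makes $\Gamma$ cocompact in $G=\PGL(V)(K)$. From $\mathscr B=\Gamma F$ with $F$ compact, properness of the $G$-action gives $G=\Gamma\cdot\{g:gx_0\in F\}$. It then reuses the noncompactness of $G/H_\omega$, which is exactly what you prove in (b). Appending this one step to your $\mathscr B=\Gamma\cdot\tau(C)$ would finish (a) without any symplectic building.

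Your route instead needs a lower bound on the \emph{building} distance to $\mathscr B_\omega$. A computation inside a single apartment does not supply this. A clean way to supply it:
\begin{enumerate}[label=\textup{(\roman*)}]
\item Let $\sigma$ be the involution of $\mathscr B$ induced by $\omega$-duality $\Lambda\mapsto\{v:\omega(v,\Lambda)\subset\OO_K\}$. It is a simplicial automorphism, hence an isometry, and it commutes with $H_\omega$ on homothety classes.
\item Then $f(x)=d(x,\sigma x)$ is $H_\omega$-invariant and continuous. Since $\mathscr B_\omega\subset\operatorname{Fix}\sigma$, it satisfies $f\le 2\,d(\cdot,\mathscr B_\omega)$.
\item The lattice $\varpi^n\OO_Ke_1+\OO_Ke_2+\OO_Ke_3+\OO_Ke_4$ has dual $\OO_Ke_1+\OO_Ke_2+\OO_Ke_3+\varpi^{-n}\OO_Ke_4$. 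So $f$ grows linearly in $n$ along this apartment.
\end{enumerate}
You can therefore work with $f$ directly. You need neither the Cartan decomposition nor properness of $\tau$; continuity, surjectivity and equivariance of $\tau$ suffice.

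What each route buys: yours gives a sharper geometric statement, namely that $\Gamma\backslash\mathscr B$ is already noncompact, with explicit escaping directions transverse to the symplectic locus. The paper's is shorter and derives (a) from the same orbit-closure fact as (b).

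One small omission: calling $Y_\Gamma$ a contact quotient for every $\Gamma\subset H_\omega$, similitudes included, uses Proposition~\ref{prop:symmetry} together with the local quotient charts. Corollary~\ref{thm:C} only treats $\PSp$.
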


The algebraic orbit decomposition and the projective contact structure
are classical; see \cite{Buczynski,LandsbergManivel}. The quotient and
retraction results for Drinfeld upper half-spaces come from
\cite{Drinfeld,Mustafin,Berkovich1995,Berkovich1999}.
Voskuil defines flag domains using stability for every maximal
\(K\)-split torus \cite[\S1.6 and Introduction]{Voskuil}.
For the Pl\"ucker embedding of \(\Gr(2,V)\),
\(\Omega_{\Mcal}\) is a proper subset of that stable locus
(Proposition~\ref{prop:stable-comparison}). The two incidence projections
on the domains defined above are surjective by
Lemmas~\ref{lem:q-surj} and~\ref{lem:p-surj}.

The notation \(\PSp(V,\omega)\) denotes the image of
\(\Sp(V,\omega)(K)\) in \(\PGL(V)(K)\).

\section{The algebraic Penrose correspondence}

Let \(V\) be a four-dimensional vector space over a field \(k\) of characteristic different from \(2\).
Set
\[
\Tcal=\Pp(V),\qquad
\Mcal=\Gr(2,V),\qquad
\Fcal=\Flag(1,2;V).
\]
A point of \(\Fcal\) is a flag
\[
L\subset W\subset V,\qquad \dim L=1,\quad \dim W=2.
\]
The incidence projections are
\[
p(L,W)=L,\qquad q(L,W)=W.
\]

\begin{proposition}\label{prop:fibers}
The map \(q:\Fcal\to\Mcal\) is a Zariski locally trivial
\(\Pp^1\)-bundle and \(p:\Fcal\to\Tcal\) is a Zariski locally trivial
\(\Pp^2\)-bundle.  Hence
\[
\dim\Tcal=3,\qquad \dim\Mcal=4,\qquad \dim\Fcal=5.
\]
\end{proposition}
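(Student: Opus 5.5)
The plan is to realize both projections as projectivizations of vector bundles on the base, since projectivizations of vector bundles are Zariski locally trivial. I start with the tautological subbundles. On \(\Mcal=\Gr(2,V)\) let \(S\subset V\otimes\OO_{\Mcal}\) be the rank-two tautological subbundle. On \(\Tcal=\Pp(V)\) let \(\OO(-1)\subset V\otimes\OO_{\Tcal}\) be the tautological line subbundle, and set \(Q=(V\otimes\OO_{\Tcal})/\OO(-1)\), which has rank three.

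For \(q\), I claim \(\Fcal\simeq\Pp(S)\), the bundle of lines in \(S\), over \(\Mcal\). The map sends \((L,W)\) to the point \(L\) of \(\Pp(W)\). As a functor of points, a family of flags over a base \(T\) consists of a rank-two subbundle \(W\subset V_T\) with locally split quotient, together with a line subbundle \(L\subset W\). This is exactly a \(T\)-point \(W\) of \(\Mcal\) together with a line subbundle of its pullback \(W=S_T\). The main point to check here is that \(L\subset W\) is locally split inside \(W\) if and only if it is locally split inside \(V_T\), and this follows because \(W\) is itself a direct summand of \(V_T\).

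For \(p\), a flag \(L\subset W\) is equivalent to the line \(W/L\) in \(V/L\). This gives \(\Fcal\simeq\Pp(Q)\) over \(\Tcal\), with the same functorial argument: a rank-two \(W\) containing \(L\) corresponds to a line subbundle of \(V_T/L\). The correspondence is given by taking preimages under \(V_T\to V_T/L\), and local splitness passes between the two sides for the same reason as before.

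Local triviality then follows. Both \(S\) and \(Q\) are trivial over the standard affine opens. For \(\Pp(V)\) these are the loci \(x_i\neq0\). For \(\Gr(2,V)\) they are the Plücker charts, where \(W\) is the graph of a map from a fixed coordinate \(2\)-plane to its complement. Over such an open \(U\) we get \(\Pp(S)|_U\simeq U\times\Pp^1\) and \(\Pp(Q)|_U\simeq U\times\Pp^2\). In fact one can write these charts down explicitly and avoid the functorial language entirely. For example, over the chart \(U_{12}\) we have \(W=\operatorname{span}(e_1+ae_3+be_4,\;e_2+ce_3+de_4)\), and the line \(L\) is given by a point of \(\Pp^1\) in this basis. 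Over \(x_1\neq0\) we have \(V/L\simeq\operatorname{span}(e_2,e_3,e_4)\), which gives the \(\Pp^2\) factor.

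For the dimensions, \(\dim\Tcal=3\) is standard. The chart \(U_{12}\simeq\mathbf A^4\) gives \(\dim\Mcal=4\), and Grassmannians are irreducible and smooth. Then \(\dim\Fcal=\dim\Mcal+1=5\), which agrees with \(\dim\Tcal+2=5\), a consistency check.

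I expect no real obstacle. The only step needing care is making sure the isomorphisms \(\Fcal\simeq\Pp(S)\) and \(\Fcal\simeq\Pp(Q)\) are isomorphisms of schemes and not merely bijections on points. The explicit charts settle this: the inverse maps are given by polynomial formulas in the coordinates \(a,b,c,d\) and the homogeneous fiber coordinates.
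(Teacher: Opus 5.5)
Your proposal is correct and follows the same route as the paper. The paper identifies the fibers as \(\Pp(W)\) and \(\Pp(V/L)\) and gets local triviality "from the tautological bundles," which is exactly your identification of \(\Fcal\) with \(\Pp(S)\) over \(\Mcal\) and with \(\Pp(Q)\) over \(\Tcal\); your functor-of-points check and explicit charts supply details the paper leaves implicit.
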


\begin{proof}
For \(W\in\Gr(2,V)\),
\[
q^{-1}(W)=\Pp(W)\simeq\Pp^1.
\]
For \(L\in\Pp(V)\), the two-planes containing \(L\) are the lines in
\(V/L\), so
\[
p^{-1}(L)=\Pp(V/L)\simeq\Pp^2.
\]
The local triviality follows from the tautological bundles.
\end{proof}

Choose a nonzero volume form
\[
\operatorname{vol}:\Lambda^4V\xrightarrow{\sim}k.
\]
The quadratic form
\[
Q(\alpha)=\frac12\operatorname{vol}(\alpha\wedge\alpha)
\]
on \(\Lambda^2V\) is nondegenerate.  The Pl\"ucker map identifies
\[
\Mcal=\Gr(2,V)
\]
with the smooth quadric
\[
Q^4=\{[\alpha]\in\Pp(\Lambda^2V):Q(\alpha)=0\}\subset\Pp^5.
\]
Indeed, for \(0\ne\alpha\in\Lambda^2V\),
\[
\alpha\wedge\alpha=0
\iff
\alpha\text{ is decomposable}.
\]

The algebraic dimensions of the two ends of the diagram are three and
four. Thus no surjective algebraic morphism \(\Pp^3_k\to Q^4_k\)
exists. The base \(S^4\) of the smooth Penrose fibration is the real
four-dimensional submanifold of quaternionic lines in
\(\Gr(2,\mathbf C^4)\).

\section{The symplectic orbits and their quadratic model}\label{sec:orbits}

Let \(k\) have characteristic different from \(2\), and fix a
nondegenerate alternating form \(\omega\) on \(V\). A two-plane is
\emph{Lagrangian} when \(\omega\) vanishes on it. Set
\[
\Dcal=\LG(2,V),\qquad
\Mcal^\circ=\Gr(2,V)\setminus\Dcal.
\]
\begin{proposition}\label{prop:orbits}
The varieties \(\Dcal\) and \(\Mcal^\circ\) are respectively the
closed and open symplectic homogeneous subvarieties of \(\Gr(2,V)\).
Over \(k\),
\[
\Dcal\simeq Q^3_{\mathrm{split}},\qquad
\Mcal^\circ\simeq\Sp_4/(\Sp_2\times\Sp_2).
\]
The symplectic group acts transitively on the rational points of each
of these varieties over every field extension of \(k\).
\end{proposition}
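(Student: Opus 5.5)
The plan is to work through the Plücker embedding and to use the form \(\omega\) as a linear functional on \(\Lambda^2V\). Since \(\omega\) is alternating, it defines a nonzero linear form \(\bar\omega:\Lambda^2V\to k\) with \(\bar\omega(v\wedge w)=\omega(v,w)\). A plane \(W=\langle v,w\rangle\) is Lagrangian exactly when \(\bar\omega(v\wedge w)=0\). Hence
\[
\Dcal=Q^4\cap\Pp(\ker\bar\omega),
\]
which is a closed hyperplane section of the Klein quadric, and \(\Mcal^\circ=Q^4\setminus\Pp(\ker\bar\omega)\) is its open complement. Both are stable under \(\Sp(V,\omega)\), since \(\Sp\) preserves \(\bar\omega\) and also \(\operatorname{vol}\), because \(\omega\wedge\omega\) is a nonzero multiple of the volume.

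For \(\Dcal\simeq Q^3_{\mathrm{split}}\), I will identify \(\Lambda^2V\supset\ker\bar\omega\) as a five-dimensional space. Then I will show that \(Q|_{\ker\bar\omega}\) is nondegenerate. For this, note that the \(Q\)-orthogonal of \(\ker\bar\omega\) is spanned by the bivector \(\omega^\sharp\) dual to \(\omega\), and that \(Q(\omega^\sharp)\neq0\) because \(\omega\) is nondegenerate (its Pfaffian is nonzero). A symplectic basis \(e_1,e_2,f_1,f_2\) gives the explicit hyperbolic decomposition: \(e_1\wedge e_2\) and \(f_1\wedge f_2\) form one hyperbolic pair, \(e_1\wedge f_2\) and \(e_2\wedge f_1\) form another, and the remaining vector is \(e_1\wedge f_1-e_2\wedge f_2\). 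So the restricted form has Witt index \(2\) and is split.

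For transitivity on rational points over any extension \(k'\), I will use Witt's theorem for the symplectic form. A Lagrangian plane has a basis \(v_1,v_2\) that extends to a symplectic basis \(v_1,v_2,w_1,w_2\). Any two symplectic bases are related by an element of \(\Sp(V)(k')\), so \(\Sp(V)(k')\) acts transitively on \(\Dcal(k')\).

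For \(\Mcal^\circ\), if \(\omega|_W\) is nondegenerate then \(V=W\oplus W^{\perp_\omega}\), and both summands are symplectic planes. Choosing a symplectic basis of each summand gives a symplectic basis of \(V\). Any two such adapted bases differ by an element of \(\Sp(V)(k')\), which gives transitivity on \(\Mcal^\circ(k')\). The stabilizer of \(W\) is the subgroup preserving the decomposition, namely \(\Sp(W)\times\Sp(W^{\perp})\simeq\Sp_2\times\Sp_2\).

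The main obstacle is upgrading this to an isomorphism of \(k\)-varieties \(\Sp_4/(\Sp_2\times\Sp_2)\xrightarrow{\sim}\Mcal^\circ\), as opposed to a bijection on points. I would first show that the orbit map \(g\mapsto gW_0\) is smooth and surjective onto \(\Mcal^\circ\), using the following facts.
\begin{enumerate}[label=\textup{(\roman*)}]
\item Transitivity holds on \(\bar k\)-points.
\item The dimensions agree: \(\dim\Sp_4-2\dim\Sp_2=10-6=4=\dim\Mcal^\circ\).
\item The scheme-theoretic stabilizer is the smooth group \(\Sp_2\times\Sp_2\), checked on Lie algebras in characteristic \(\ne2\): the stabilizer of \(W_0\) in \(\mathfrak{sp}_4\) is \(\mathfrak{sp}(W_0)\oplus\mathfrak{sp}(W_0^\perp)\).
\end{enumerate}
Then the induced map from the quotient is a bijective smooth (hence étale) morphism onto the smooth variety \(\Mcal^\circ\), so it is an isomorphism. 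The same argument gives \(\Dcal\simeq\Sp_4/P\) homogeneous. The identification with \(Q^3\) is via the hyperplane section described above.
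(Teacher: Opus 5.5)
Your proposal is correct and follows the paper's route. Both use the rank dichotomy on two-planes, extension of bases to symplectic bases for transitivity on both loci, the stabilizer \(\Sp(W)\times\Sp(W^{\perp_\omega})\), and the description of the Lagrangian locus as the hyperplane section \(c_\omega=0\) of the Klein quadric, whose restricted form is split of rank five. Your orbit-map argument (smooth scheme-theoretic stabilizer via the Lie algebra, the dimension count, and a bijective \'etale map) spells out what the paper compresses into the remark that these arguments identify the homogeneous varieties over \(k\).
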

\begin{proof}
An alternating form on a two-plane has rank zero or two. A basis of a
Lagrangian plane extends to a symplectic basis, which proves transitivity
on Lagrangian planes. For \(W\in\Mcal^\circ\),
\(V=W\oplus W^{\perp_\omega}\) is a decomposition into symplectic
planes. Choosing a symplectic basis of each summand proves transitivity
on the open locus. Its stabilizer is
\(\Sp(W)\times\Sp(W^{\perp_\omega})\); these arguments hold after
arbitrary scalar extension and identify the homogeneous varieties over
\(k\). In a symplectic basis with
\(\omega(e_1,e_4)=\omega(e_2,e_3)=1\), let \(p_{ij}\) be the
coefficients of a bivector in the basis \(e_i\wedge e_j\).
The Lagrangian condition is \(p_{23}=-p_{14}\), so the Pl\"ucker
equation becomes \(p_{12}p_{34}-p_{13}p_{24}-p_{14}^2=0\), a split
quadratic form of rank five.
\end{proof}

Choose a basis with \(\omega(e_1,e_4)=\omega(e_2,e_3)=1\), all other
pairings for \(i<j\) zero, and normalize \(\operatorname{vol}(e_1\wedge e_2\wedge e_3\wedge e_4)=1\).
Write \(\alpha=\sum_{i<j}p_{ij}e_i\wedge e_j\). Then
\[
Q(\alpha)=p_{12}p_{34}-p_{13}p_{24}+p_{14}p_{23},\qquad
c_\omega(\alpha)=p_{14}+p_{23}.
\]
Put \(b=e_1\wedge e_4+e_2\wedge e_3\), \(E=\ker c_\omega\), and
\(q_E=-Q|_E\). On \(E\), write \(p_{14}=s\), \(p_{23}=-s\); thus
\[
q_E=s^2-p_{12}p_{34}+p_{13}p_{24}.
\]
This is a nondegenerate split five-dimensional quadratic form (Witt
index two). The equation \(q_E=0\) in \(\Pp(E)\) gives \(\Dcal\).

\begin{proposition}\label{prop:quadric-model}
There is an isomorphism over \(k\)
\[
\Mcal^\circ\longrightarrow\{\beta\in E:q_E(\beta)=1\},
\qquad [\alpha]\longmapsto\frac{2\alpha}{c_\omega(\alpha)}-b.
\]
Its inverse sends \(\beta\) to \([\beta+b]\).
\end{proposition}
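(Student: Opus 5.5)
Both maps are given by explicit formulas, so the plan is to check directly that each is well defined, lands in the stated target, and is inverse to the other. The key input is a single identity relating \(Q\), \(c_\omega\) and \(b\). First, \(\Mcal^\circ\) is exactly the locus of \([\alpha]\in Q^4\) with \(c_\omega(\alpha)\neq0\). For decomposable \(\alpha=u\wedge v\), the pairing \(c_\omega(\alpha)\) equals \(\omega(u,v)\) up to the normalization fixed by the chosen basis. One checks this on basis bivectors: \(c_\omega(e_1\wedge e_4)=1=\omega(e_1,e_4)\), similarly for \(e_2\wedge e_3\), and zero on the others. Hence \(W=\langle u,v\rangle\) is non-Lagrangian iff \(c_\omega(\alpha)\ne0\). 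Since \(c_\omega\) is linear, the expression \(2\alpha/c_\omega(\alpha)\) depends only on \([\alpha]\), so the forward map is a well-defined morphism on the open set \(\{c_\omega\ne0\}\cap Q^4\). Its value lies in \(E\), because \(c_\omega(b)=2\) and so \(c_\omega(2\alpha/c_\omega(\alpha)-b)=2-2=0\).

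Next comes the quadratic computation. Let \(B\) be the polar form of \(Q\), with \(Q(x+y)=Q(x)+B(x,y)+Q(y)\). From the coordinate formula, \(Q(b)=p_{14}p_{23}=1\), and the bilinear form satisfies \(B(\alpha,b)=p_{14}+p_{23}=c_\omega(\alpha)\). I would verify this last identity by polarizing \(p_{14}p_{23}\), which gives \(B(\alpha,b)=p_{14}\cdot1+1\cdot p_{23}\). For \(\beta=2\alpha/c-b\) with \(Q(\alpha)=0\) and \(c=c_\omega(\alpha)\),
\[
Q(\beta)=\tfrac{4}{c^2}Q(\alpha)-\tfrac{2}{c}B(\alpha,b)+Q(b)=0-2+1=-1,
\]
so \(q_E(\beta)=-Q(\beta)=1\). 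This is the one place where the normalizations \(\operatorname{vol}(e_1\wedge e_2\wedge e_3\wedge e_4)=1\) and the factor \(\tfrac12\) in \(Q\) matter. It is therefore the step to check most carefully, and I expect it to be the only real obstacle, namely sign and constant bookkeeping.

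For the inverse, take \(\beta\in E\) with \(q_E(\beta)=1\). Then
\[
Q(\beta+b)=Q(\beta)+B(\beta,b)+Q(b)=-1+c_\omega(\beta)+1=0,
\]
since \(c_\omega(\beta)=0\). Also \(c_\omega(\beta+b)=2\ne0\), which in particular gives \(\beta+b\ne0\). Hence \([\beta+b]\in Q^4\) with \(c_\omega\neq0\), that is, \([\beta+b]\in\Mcal^\circ\).

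The two compositions are then immediate. Starting from \(\beta\), we get \(2(\beta+b)/2-b=\beta\). Starting from \([\alpha]\), we get \([2\alpha/c]=[\alpha]\). Both maps are morphisms of \(k\)-varieties given by polynomial formulas, the forward one regular wherever \(c_\omega\ne0\). Since char \(k\ne2\), everything is defined over \(k\), so they are mutually inverse isomorphisms.
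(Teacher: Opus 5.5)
Your proposal is correct and follows the paper's proof essentially step for step. The identities \(Q(b)=1\) and \(B_Q(\alpha,b)=c_\omega(\alpha)\) give \(Q(\beta)=-1\) in the forward direction and \(Q(\beta+b)=0\), \(c_\omega(\beta+b)=2\) in the reverse direction, and you then check the two compositions directly, just as the paper does (your check on basis bivectors shows \(c_\omega(u\wedge v)=\omega(u,v)\) exactly, so \(\Mcal^\circ=Q^4\cap\{c_\omega\neq0\}\) with no normalization caveat needed).
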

\begin{proof}
The polar form \(B_Q(u,v)=Q(u+v)-Q(u)-Q(v)\) satisfies
\(B_Q(\alpha,b)=c_\omega(\alpha)\), and \(Q(b)=1\).
For a decomposable \(\alpha\) with \(c_\omega(\alpha)\ne0\), the
vector \(\beta=2\alpha/c_\omega(\alpha)-b\) belongs to \(E\) and
has \(Q(\beta)=-1\). Conversely, if \(\beta\in E\) has
\(q_E(\beta)=1\), then \(Q(\beta+b)=0\) and
\(c_\omega(\beta+b)=2\). Thus \(\beta+b\) is nonzero and decomposable.
The two formulas are regular and inverse to each other.
\end{proof}

Let \(e=e_1\wedge e_4-e_2\wedge e_3\in E\). Then
\(q_E(e)=1\), and \(e^\perp\) is the split four-dimensional quadratic
space with form \(-p_{12}p_{34}+p_{13}p_{24}\). The classical split
exceptional isomorphisms are
\[
\Spin(E,q_E)\simeq\Sp_4,\qquad
\Spin(e^\perp,q_E|_{e^\perp})\simeq\Sp_2\times\Sp_2.
\]
The first is realized by the action on primitive bivectors \(E\);
its map to \(\mathrm{SO}(E,q_E)\) has kernel \(\{1,-1\}\).
Under this identification, the stabilizer of \(e\) is the product
preserving the planes \(\langle e_1,e_4\rangle\) and
\(\langle e_2,e_3\rangle\). Hence
\[
\Mcal^\circ\simeq
\Spin(E,q_E)/\Spin(e^\perp,q_E|_{e^\perp}).
\]
Thus \(\Mcal^\circ\) is the homogeneous space
\(\Spin(5)/\Spin(4)\) for these split quadratic forms. Its quadric
model is affine.

\section{Legendrian and transverse lines}\label{sec:legendrian}

The symplectic form \(\omega\) defines the standard contact
distribution on \(\Tcal=\Pp(V)\); see
\cite{LandsbergManivel,Buczynski}.  At a line \(L\subset V\),
\[
T_{[L]}\Pp(V)\simeq\operatorname{Hom}(L,V/L).
\]
The contact plane is
\[
\mathcal H_{[L]}
=
\left\{
\varphi\in\operatorname{Hom}(L,V/L):
\omega(v,\widetilde{\varphi(v)})=0
\right\},
\]
where \(0\ne v\in L\) and
\(\widetilde{\varphi(v)}\) is any lift to \(V\).

On \(V\setminus\{0\}\), let \(\xi\) be the Euler vector field,
\(\xi_v=v\), and set
\[
\widetilde\theta=\iota_\xi\omega.
\]
Scalar multiplication by \(a\in k^\times\) pulls
\(\widetilde\theta\) back to \(a^2\widetilde\theta\), and
\(\iota_\xi\widetilde\theta=\omega(\xi,\xi)=0\).
These homogeneity and horizontality conditions give an
\(\OO_{\Pp(V)}(2)\)-valued one-form
\[
\theta:T_{\Pp(V)}\longrightarrow\OO_{\Pp(V)}(2)
\]
with kernel \(\mathcal H\).  In a local trivialization of \(\OO(2)\), let \(\theta_U\) denote
the corresponding ordinary one-form. The identity
\(d\widetilde\theta=2\omega\) and nondegeneracy of \(\omega\) give
\(\theta_U\wedge d\theta_U\ne0\). Changing the trivialization
multiplies this three-form by an invertible square. These local forms
therefore define a nowhere-zero section of
\(\Omega^3_{\Pp(V)}\otimes\OO(4)\), establishing the contact condition.
A smooth curve is called Legendrian when its tangent line lies in the
contact distribution at every point.

For \(W\in\Gr(2,V)\), let
\[
\ell_W=\Pp(W)\subset\Pp(V).
\]

\begin{proposition}\label{prop:line_dichotomy}
The line \(\ell_W\) is Legendrian if and only if \(W\) is Lagrangian.
If \(\omega|_W\) is nondegenerate, then \(\ell_W\) is everywhere
transverse to the contact distribution.
\end{proposition}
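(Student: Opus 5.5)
The plan is to compute the tangent line of \(\ell_W\) at each of its points and test it directly against the description of \(\mathcal H_{[L]}\) given above. Fix \(W\in\Gr(2,V)\) and a point \([L]\in\ell_W\), and choose \(0\ne v\in L\). Since \(\ell_W=\Pp(W)\) is a linear subspace of \(\Pp(V)\), its tangent line at \([L]\) inside \(T_{[L]}\Pp(V)\simeq\operatorname{Hom}(L,V/L)\) is
\[
T_{[L]}\ell_W=\operatorname{Hom}(L,W/L)\subset\operatorname{Hom}(L,V/L).
\]
This follows from the tautological description \(T_{[L]}\Pp(W)=\operatorname{Hom}(L,W/L)\) and the functoriality of the inclusion \(W\subset V\). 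This space is one-dimensional, spanned by the map \(\varphi_w\) that sends \(v\) to the class of \(w\), where \(w\in W\) is any vector completing \(v\) to a basis of \(W\).

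Next I evaluate the contact condition on this generator. We may take \(w\) itself as the lift of \(\varphi_w(v)\). Any other lift differs from \(w\) by a multiple of \(v\), and since \(\omega(v,v)=0\) the condition is independent of the lift. Hence
\[
\varphi_w\in\mathcal H_{[L]}\iff\omega(v,w)=0.
\]
Because \(\{v,w\}\) is a basis of \(W\) and \(\omega\) is alternating, \(\omega(v,w)=0\) holds exactly when \(\omega|_W=0\). This condition does not depend on the chosen point \([L]\in\ell_W\).

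Both claims then follow at once. If \(W\) is Lagrangian, the tangent line lies in \(\mathcal H\) at every point, so \(\ell_W\) is Legendrian. Conversely, if \(\ell_W\) is Legendrian, the tangent condition holds at any one point, which forces \(\omega|_W=0\). If \(\omega|_W\) is nondegenerate, then \(\omega(v,w)\ne0\) at every point. The tangent line therefore meets the rank-two subspace \(\mathcal H_{[L]}\) only in \(0\), inside the three-dimensional space \(T_{[L]}\Pp(V)\), and so \(\ell_W\) is everywhere transverse.

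The only step needing care is the identification of \(T_{[L]}\ell_W\) with \(\operatorname{Hom}(L,W/L)\) compatibly with the chosen model of \(T_{[L]}\Pp(V)\). Alternatively, one can check it with the one-form \(\theta\): parametrize \(\ell_W\) near \([L]\) by \(t\mapsto[v+tw]\), and observe that \(\widetilde\theta\) evaluated on the velocity \(w\) at \(v+tw\) equals \(\omega(v+tw,w)=\omega(v,w)\), which recovers the same criterion.
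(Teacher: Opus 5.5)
Your proof is correct and follows the paper's argument: both identify \(T_{[L]}\ell_W\) with \(\operatorname{Hom}(L,W/L)\) and reduce the contact condition to \(\omega(L,W)=0\), which for a two-plane is equivalent to \(\omega|_W=0\). You also spell out the converse (Legendrian implies Lagrangian) and the independence of the lift, both of which the paper leaves implicit.
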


\begin{proof}
Fix \(L\subset W\), \(\dim L=1\).  The tangent line to \(\ell_W\) at
\([L]\) is
\[
T_{[L]}\ell_W\simeq\operatorname{Hom}(L,W/L).
\]
It is contained in \(\mathcal H_{[L]}\) precisely when
\[
\omega(L,W)=0.
\]
If \(W\) is Lagrangian this holds for every \(L\subset W\), so
\(\ell_W\) is Legendrian.

Assume that \(\omega|_W\) is nondegenerate.  In the symplectic
plane \(W\), the orthogonal complement of a line \(L\) is \(L\) itself.
Hence
\[
\omega(L,W)\ne0
\]
for every \(L\subset W\).  Therefore
\(T_{[L]}\ell_W\not\subset\mathcal H_{[L]}\) at every point.
\end{proof}

Proposition~\ref{prop:line_dichotomy} is the line case of the standard
equivalence between projective Legendrian subvarieties and Lagrangian
affine cones; see \cite{Buczynski,LandsbergManivel}.  

\section{Projective symmetries}

\begin{proposition}\label{prop:symmetry}
A projective linear transformation preserves \(\mathcal H\) if and
only if it is induced by a symplectic similitude, that is, by
\(g\in\GL(V)\) with \(g^*\omega=c\omega\) for some \(c\in k^\times\).
The group of these transformations is \(\PGSp(V,\omega)\).
It preserves both symplectic orbits and the incidence diagram.
\end{proposition}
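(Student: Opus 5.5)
The plan is to characterize the transformations preserving \(\mathcal H\) through the Legendrian lines. By Proposition~\ref{prop:line_dichotomy}, the Legendrian lines \(\ell_W\) are exactly those with \(W\) Lagrangian, and every other line is nowhere tangent to \(\mathcal H\).

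The easy direction comes first. If \(g^*\omega=c\omega\), then \(g\) maps \(L^{\perp_\omega}\) to \((gL)^{\perp_\omega}\). Hence the induced map \(\operatorname{Hom}(L,V/L)\to\operatorname{Hom}(gL,V/gL)\) carries \(\mathcal H_{[L]}\) onto \(\mathcal H_{[gL]}\), because the defining condition \(\omega(v,\widetilde{\varphi(v)})=0\) is only rescaled by \(c\). Scalars act trivially on \(\Pp(V)\), so these transformations form the image of \(\GSp(V,\omega)\) in \(\PGL(V)\), which is \(\PGSp(V,\omega)\). Since \(g\) preserves vanishing and nonvanishing of \(\omega|_W\), it preserves \(\Dcal\) and \(\Mcal^\circ\). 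It also commutes with \(p\) and \(q\), because \(\GL(V)\) acts on the whole diagram through \(L\mapsto gL\) and \(W\mapsto gW\).

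For the converse, let \(g\in\GL(V)\) induce a map preserving \(\mathcal H\). Then \(g\) carries Legendrian curves to Legendrian curves and lines to lines, so by Proposition~\ref{prop:line_dichotomy} it maps Lagrangian planes to Lagrangian planes. Put \(\omega'=g^*\omega\). For \(W\) Lagrangian for \(\omega\), \(gW\) is Lagrangian for \(\omega\), so \(\omega'|_W=0\). I would avoid the Lagrangian-planes route for the key identity and argue more directly. Preservation of \(\mathcal H\) says \(g(L^{\perp_\omega})=(gL)^{\perp_\omega}\) for every line \(L\), since \(\mathcal H_{[L]}=\operatorname{Hom}(L,L^{\perp_\omega}/L)\) determines \(L^{\perp_\omega}\). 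Equivalently, \(L^{\perp_{\omega'}}=L^{\perp_\omega}\) for every \(L\).

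The main step is to show that two nondegenerate alternating forms with the same orthogonal hyperplane at every vector are proportional. Write \(\omega'(u,v)=\omega(Au,v)\) with \(A\) invertible. Then \(\ker\omega'(v,\cdot)=\ker\omega(v,\cdot)\) gives \(\omega(Av,\cdot)\propto\omega(v,\cdot)\), so \(Av\) is a multiple of \(v\) for every \(v\). The standard argument then shows \(A\) is scalar: comparing \(v\), \(w\) and \(v+w\) works over any field, and in the finite-field case one also uses \(\dim V\ge2\). So \(g^*\omega=c\omega\).

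The subtle point is the geometric claim that preserving \(\mathcal H\) means preserving it fiberwise. This holds because the differential of \(g\) at \([L]\) is the induced map \(\operatorname{Hom}(L,V/L)\to\operatorname{Hom}(gL,V/gL)\). I expect that to be the only real obstacle. Checking it over \(k\)-points suffices after base change to \(\bar k\), or the argument can be run scheme-theoretically, since the identity \(L^{\perp_{\omega'}}=L^{\perp_\omega}\) is a closed condition that holds everywhere.
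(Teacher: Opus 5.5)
Your proposal is correct and follows the paper's argument. Both proofs reduce preservation of \(\mathcal H\) to the fact that \(\omega(v,\cdot)\) and \((g^*\omega)(v,\cdot)\) have the same kernel for every \(v\ne0\), and both then show the proportionality factor is constant by comparing \(u\), \(v\), \(u+v\); your ``every vector is an eigenvector of \(A\)'' step is the same computation. The only difference is that you read the kernel identity \(L^{\perp_{g^*\omega}}=L^{\perp_\omega}\) directly off the fibers \(\mathcal H_{[L]}=\operatorname{Hom}(L,L^{\perp_\omega}/L)\), whereas the paper obtains it from the Legendrian-line criterion of Proposition~\ref{prop:line_dichotomy}; either works.
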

\begin{proof}
Let \(g\in\GL(V)\).  If
\[
g^*\omega=c\omega,\qquad c\in k^\times,
\]
then the contact one-form is multiplied by \(c\), and its kernel is
unchanged.  Thus the projective image of \(\GSp(V,\omega)\) preserves
\(\mathcal H\).

Conversely, suppose that the projective transformation induced by
\(g\) preserves \(\mathcal H\), and set
\[
\beta:=g^*\omega.
\]
By Proposition~\ref{prop:line_dichotomy}, a projective line
\(\Pp(\langle v,w\rangle)\) is Legendrian precisely when
\(\omega(v,w)=0\).  Since \(g\) preserves both projective lines and
the contact distribution,
\[
\omega(v,w)=0
\quad\Longleftrightarrow\quad
\beta(v,w)=0
\qquad(v,w\in V).
\]
Fix \(0\ne v\in V\).  The two nonzero linear forms
\[
\alpha_v:=\omega(v,\cdot),\qquad
\beta_v:=\beta(v,\cdot)
\]
have the same kernel; hence
\[
\beta_v=c(v)\alpha_v
\]
for some \(c(v)\in k^\times\).  If \(u,v\) are linearly independent,
then \(\alpha_u\) and \(\alpha_v\) are linearly independent because
\(\omega\) is nondegenerate.  Using
\[
\beta_{u+v}=\beta_u+\beta_v
\]
and comparing the coefficients of \(\alpha_u\) and \(\alpha_v\)
gives
\[
c(u)=c(v)=c(u+v).
\]
If \(u\) and \(v\) are proportional, choose a vector independent of
both and apply the preceding argument twice.  Thus \(c(v)\) is
independent of \(v\ne0\); write it as \(c\).  Therefore
\[
g^*\omega=\beta=c\omega,
\]
so \(g\) is a symplectic similitude and
\[
\Aut_{\mathrm{proj}}(\Pp(V),\mathcal H)=\PGSp(V,\omega).
\]

A symplectic similitude preserves the condition
\(\omega|_W=0\), hence preserves \(\LG(2,V)\) and its complement.
Finally, every projective linear transformation sends flags
\(L\subset W\) to flags \(gL\subset gW\), so the incidence maps are
equivariant.
\end{proof}

Over an algebraically closed field, the projective images of
\(\GSp_4\) and \(\Sp_4\) coincide; see \cite{Buczynski}.

\section{Berkovich analytification}

Let \(k\) be a complete non-Archimedean field of characteristic
different from \(2\). Berkovich analytification
\cite[\S3.4]{Berkovich1990} gives
\[
\begin{tikzcd}[column sep=large]
& \Fcal^{\an}\arrow[dl,"p^{\an}"']\arrow[dr,"q^{\an}"] &\\
\Tcal^{\an} && \Mcal^{\an}.
\end{tikzcd}
\]

\begin{proposition}\label{prop:analytic_fibers}
For \(x\in\Mcal^{\an}\), with completed residue field
\(\mathscr H(x)\),
\[
(q^{\an})^{-1}(x)
\simeq
(\Pp^1_{\mathscr H(x)})^{\an}.
\]
For \(z\in\Tcal^{\an}\),
\[
(p^{\an})^{-1}(z)
\simeq
(\Pp^2_{\mathscr H(z)})^{\an}.
\]
\end{proposition}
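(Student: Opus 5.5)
The plan is to combine the Zariski local triviality of Proposition~\ref{prop:fibers} with the standard description of Berkovich fibers: for a morphism of finite type \(f:X\to Y\) of \(k\)-schemes and \(y\in Y^{\an}\), the fiber \((f^{\an})^{-1}(y)\) is canonically \((X\times_Y\operatorname{Spec}\mathscr H(y))^{\an}\), analytified over \(\mathscr H(y)\). Here \(y\) corresponds to a morphism \(\operatorname{Spec}\mathscr H(y)\to Y\). Granting this, it suffices to show that the scheme-theoretic base change of \(q\) along \(\operatorname{Spec}\mathscr H(x)\to\Mcal\) is \(\Pp^1_{\mathscr H(x)}\), and similarly that the base change of \(p\) is \(\Pp^2_{\mathscr H(z)}\).

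First I will justify the fiber identification. Since analytification is local on \(Y\), choose a Zariski open affine \(U=\operatorname{Spec}A\subset Y\) whose analytification contains \(y\). The point \(y\) is a multiplicative seminorm on \(A\) with residue field \(\mathscr H(y)\). By \cite[\S3.4]{Berkovich1990}, analytification commutes with fiber products, and the fiber of \(f^{\an}\) over \(y\) is the analytic space over \(\mathscr H(y)\) associated with \(X_U\otimes_A\mathscr H(y)\). This is the general principle I will cite rather than reprove.

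Next I will base change the tautological structure. On \(\Mcal\), let \(\mathcal S\) be the rank-two tautological subbundle. Then \(\Fcal=\Pp(\mathcal S)\) over \(\Mcal\) with \(q\) as its projection. Over \(\Tcal\), let \(\mathcal L\subset V\otimes\OO\) be the tautological line; then \(\Fcal=\Pp(V\otimes\OO/\mathcal L)\) with \(p\) as its projection. These two identifications are exactly the content of the proof of Proposition~\ref{prop:fibers}. Choose a Zariski open \(U\ni\) (the image of) \(x\) on which \(\mathcal S\) is trivial, so that \(q^{-1}(U)\simeq U\times\Pp^1\). Base change along \(\operatorname{Spec}\mathscr H(x)\to U\) then gives \(\Pp^1_{\mathscr H(x)}\), and analytifying yields the first claim. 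The \(p\) case is identical, using a trivialization of \(V\otimes\OO/\mathcal L\) of rank three, which gives \(\Pp^2_{\mathscr H(z)}\).

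The only real obstacle is the first step: being careful that "fiber" means the Berkovich fiber \(\mathscr H(y)\)-analytic space. It is not merely a topological subset, although the two are homeomorphic. I will cite Berkovich's compatibility of analytification with base change and fiber products for this.
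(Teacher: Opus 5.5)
Your proposal is correct and follows essentially the same route as the paper. Both arguments identify the Berkovich fiber with the \(\mathscr H(y)\)-analytification of the scheme-theoretic fiber, via Berkovich's compatibility of analytification with fiber products and ground-field extension, and then read off \(\Pp^1\) and \(\Pp^2\) from the projective-bundle structure of Proposition~\ref{prop:fibers}. Your Zariski trivialization step is harmless but not strictly needed, since any vector bundle over the field \(\mathscr H(y)\) is already trivial.
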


\begin{proof}
Analytification is compatible with fiber products and extension of the
ground field; see \cite[\S3.4]{Berkovich1990}.  Base change of the
algebraic projective bundles to the completed residue field
\(\mathscr H(x)\) or \(\mathscr H(z)\) therefore gives the stated
analytic fibers.
\end{proof}

The algebraic contact distribution also analytifies.  Consequently the
classification of lines in Proposition~\ref{prop:line_dichotomy} is compatible
with extension of the ground field and with analytification of the
corresponding algebraic lines.

The decomposition into symplectic homogeneous subvarieties gives
\[
\Gr(2,V)^{\an}=(\Mcal^\circ)^{\an}\sqcup\Dcal^{\an}.
\]
The quadratic model of Section~\ref{sec:orbits} and its homogeneous-space
interpretation hold over \(k\) and therefore analytify over \(k\).
The projective quadric is compact in the Berkovich topology.

\section{Free discrete symplectic groups}

Let \(K\) be a non-Archimedean local field with
\(\operatorname{char}K\ne2\), of arbitrary residue characteristic.
Fix a uniformizer \(\pi\), a generator of the maximal ideal of its
valuation ring, and choose a symplectic basis
\[
e_1,e_2,e_3,e_4
\]
such that
\[
\omega(e_1,e_4)=1,\qquad
\omega(e_2,e_3)=1
\]
and \(\omega(e_i,e_j)=0\) for all other pairs with \(i<j\).

\begin{definition}
An element \(g\in\PGL_n(K)\) is \emph{proximal} if a representative
in \(\GL_n(K)\) has a unique
eigenvalue of maximal absolute value, this eigenvalue is simple and
belongs to \(K\), and its eigenline is therefore a \(K\)-rational
point of \(\Pp^{n-1}\).  Its \emph{attracting point} is that
eigenline and its \emph{repelling hyperplane} is the projectivization
of the sum of the remaining generalized eigenspaces.  It is
\emph{biproximal} if both \(g\) and \(g^{-1}\) are proximal.
\end{definition}

Consider
\[
a=
\operatorname{diag}(\pi^{-3},\pi^{-1},\pi,\pi^3)
\in\Sp_4(K).
\]
The projective transformation induced by \(a\) is biproximal: its
attracting point is \([e_1]\), its repelling hyperplane is
\(\Pp(\langle e_2,e_3,e_4\rangle)\), and the inverse has attracting
point \([e_4]\) and repelling hyperplane
\(\Pp(\langle e_1,e_2,e_3\rangle)\).

For the projective ping--pong criterion, see Tits~\cite{Tits}
and Breuillard--Gelander~\cite{BG}.

\begin{lemma}[Projective ping--pong]\label{lem:pingpong}
Let \(K\) be a local field and let \(g_1,\ldots,g_r\in\PGL_n(K)\) be
biproximal.  Suppose the \(2r\) attracting points of \(g_i^{\pm1}\) are pairwise
distinct and that the attracting point of \(g_i^\varepsilon\) does not
belong to the repelling hyperplane of \(g_j^\delta\) whenever the
letters \(g_i^\varepsilon\) and \(g_j^\delta\) are not inverse.
Then, after replacing all \(g_i\) by a common sufficiently large
positive power, they generate a free discrete subgroup of
\(\PGL_n(K)\).
\end{lemma}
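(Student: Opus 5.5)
We prove Lemma~\ref{lem:pingpong} by the classical Tits ping--pong argument on \(\Pp^{n-1}(K)\), combined with a discreteness argument via the same ping--pong sets. Denote the \(2r\) letters by \(s\in S=\{g_i^{\pm1}\}\). For each \(s\), write \(x_s\) for its attracting point and \(H_s\) for its repelling hyperplane.

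\textbf{Step 1: local contraction.} Since \(s\) is proximal with a simple dominant eigenvalue in \(K\), a compactness argument in the metric \(d\) on \(\Pp^{n-1}(K)\) given by the sup-norm chordal distance shows the following. For every neighborhood \(U\) of \(x_s\) and every compact \(C\subset\Pp^{n-1}(K)\setminus H_s\), one has \(s^m(C)\subset U\) for all \(m\) large. The reason is that in a Jordan decomposition \(V=\langle x_s\rangle\oplus H_s\), the power \(s^m\) multiplies the \(x_s\)-coordinate by \(\lambda^m\). The other block has spectral radius \(<|\lambda|\), so its norm is \(o(|\lambda|^m)\) by the spectral radius formula.

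\textbf{Step 2: choice of ping--pong sets.} By hypothesis, \(x_s\notin H_t\) whenever \(t\ne s^{-1}\). Choose small closed balls \(B_s\ni x_s\) that are pairwise disjoint, which is possible since the \(x_s\) are distinct. Also require \(B_s\cap H_t=\emptyset\) for \(t\ne s^{-1}\). Set
\[
C_t=\bigcup_{s\ne t^{-1}}B_s,
\]
which is a compact set disjoint from \(H_t\). By Step 1 there is an \(N\) with \(t^N(C_t)\subset \operatorname{int}B_t\) for all \(t\in S\), and \(N\) can be taken common to all \(t\) since \(S\) is finite. Replace each \(g_i\) by \(g_i^N\). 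Any nonempty reduced word \(w=t_1\cdots t_m\) in the new letters then satisfies \(w(B_s)\subset\operatorname{int}B_{t_1}\) for every \(s\ne t_m^{-1}\), by induction on \(m\).

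\textbf{Step 3: freeness.} Suppose \(w\) is a nontrivial reduced word. If \(t_m\ne t_1^{-1}\), pick \(s=t_m\). Then \(w(B_s)\subset B_{t_1}\) with \(t_1\ne t_m^{-1}\). If \(s\ne t_1\), the balls \(B_s\) and \(B_{t_1}\) are disjoint, so \(w\ne1\). If \(s=t_1\), then \(w\) maps \(B_s\) strictly into its own interior, so again \(w\ne1\). If instead \(t_m=t_1^{-1}\), conjugate \(w\) to a cyclically reduced word and apply the same argument. This requires \(r\ge1\) and at least three balls, i.e.\ \(2r\ge3\) when \(r\ge2\); the case \(r=1\) reduces to an infinite cyclic group generated by a proximal element. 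Hence the group is free on the new generators.

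\textbf{Step 4: discreteness.} This is the main obstacle. We proceed as follows. Pick a point \(y\) lying in no \(B_s\), and keep it away from the \(H_t\) as well. For \(n\ge2\) and \(r\ge1\), the ball radii can be shrunk so that such a \(y\) exists. Fix an open neighborhood \(U\) of \(y\) disjoint from every \(B_s\). By Step 2, every nontrivial reduced word \(w\) sends some \(B_s\) into some \(B_{t_1}\). We would rather use the standard version of the argument. Take an open neighborhood \(\mathcal U\) of the identity in \(\PGL_n(K)\) small enough that every \(h\in\mathcal U\) moves some chosen point \(z\in B_s\), for a suitable \(s\) not equal to \(t_m^{-1}\) for either choice of last letter, only slightly. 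Concretely, choose two distinct letters \(s,s'\) together with points \(z\in \operatorname{int} B_s\) and \(z'\in\operatorname{int}B_{s'}\). For \(h\) near \(1\), we have \(h(z)\in B_s\setminus\bigcup_{u\ne s}B_u\), and similarly for \(z'\). A nontrivial reduced word \(w\) has \(t_m^{-1}\ne s\) or \(t_m^{-1}\ne s'\), say the former. Then \(w(z)\in B_{t_1}\). If \(t_1\ne s\), this contradicts \(w\in\mathcal U\). If \(t_1=s\), then \(w(B_s)\) lies in the smaller set \(t_1^N(C_{t_1})\). Shrinking the balls once more so that \(t^N(C_t)\) misses a fixed neighborhood of \(z\) gives the contradiction. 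Hence \(\mathcal U\) meets the group only in \(\{1\}\), and the group is discrete.
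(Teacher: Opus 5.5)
Your overall plan is sound, but two steps fail as written.

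In Step~3, the case \(s=t_1\) (e.g.\ \(w=g_1g_2g_1\), or a single letter) rests on the claim that \(w(B_s)\subset\operatorname{int}B_s\) forces \(w\ne1\). The paper only needs non-Archimedean \(K\). There the sup-norm chordal distance on \(\Pp^{n-1}(K)\) is an ultrametric, so closed balls are open and \(\operatorname{int}B_s=B_s\). The inclusion is then satisfied by the identity and proves nothing. The repair is the three-ball argument you allude to but do not use. For a nontrivial reduced word \(w=t_1\cdots t_m\) and \(2r\ge3\), choose \(s\notin\{t_1,t_m^{-1}\}\). Then \(w(B_s)\subset B_{t_1}\) and \(B_s\cap B_{t_1}=\varnothing\), so \(w\ne1\), and no cyclic reduction is needed.

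In Step~4, ``shrinking the balls once more so that \(t^N(C_t)\) misses a fixed neighborhood of \(z\)'' makes the choices in the wrong order. Once \(N\) is fixed, shrinking the \(B_t\) can destroy \(t^N(C_t)\subset B_t\), while changing \(N\) changes the group. You must first fix \(z\in B_s\), \(z'\in B_{s'}\) and open neighborhoods \(V\subset B_s\), \(V'\subset B_{s'}\) with \(x_s\notin\overline V\) and \(x_{s'}\notin\overline{V'}\). Only then take \(N\) so large that, besides \(t^N(C_t)\subset B_t\), also \(s^N(C_s)\cap V=\varnothing\) and \({s'}^N(C_{s'})\cap V'=\varnothing\); Step~1 permits this. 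Your case analysis then shows that \(\mathcal U=\{h:h(z)\in V,\ h(z')\in V'\}\) meets the group only in \(\{1\}\). This already proves freeness, so Step~3 becomes redundant.

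The route you abandoned at the start of Step~4 is the paper's. It stalled only because you applied contraction to \(C_t\) alone. Take a small closed ball \(U\) around \(y\) disjoint from every \(B_s\) and \(H_t\), and replace \(C_t\) by \(U\cup C_t\), which is still compact and disjoint from \(H_t\). Then every nontrivial reduced \(w\) satisfies \(w(U)\subset B_{t_1}\), which is disjoint from \(U\), and this yields freeness and discreteness at once. The paper gets the same from the stronger inclusions \(g_i^\varepsilon(\Pp^{n-1}(K)\setminus R_i^\varepsilon)\subset A_i^\varepsilon\). These contract the whole complement of a neighborhood of the repelling hyperplane rather than a union of balls.
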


\begin{proof}
Choose a point outside all the finitely many repelling hyperplanes
and attracting points; such a point exists because \(K\) is infinite.
Choose a sufficiently small open neighborhood \(U\) of it. For each
letter \(g_i^{\pm1}\), choose an attracting neighborhood
\(A_i^\pm\) and a neighborhood \(R_i^\pm\) of its repelling hyperplane
so that all attracting neighborhoods are pairwise disjoint and
\(A_i^\varepsilon\cap R_j^\delta=\varnothing\) whenever the letters
\(g_i^\varepsilon\) and \(g_j^\delta\) are not inverse.  Proximality
implies uniform contraction on compact subsets of the complement of the
repelling hyperplane.  After replacing the \(g_i\) by a common
sufficiently large positive power,
\[
g_i^\varepsilon\bigl(\Pp^{n-1}(K)\setminus R_i^\varepsilon\bigr)
\subset A_i^\varepsilon
\qquad(\varepsilon=\pm1).
\]
The ordinary ping--pong argument gives freeness.

All the neighborhoods can initially be chosen so that \(U\) is
disjoint from every \(A_i^\pm\) and \(R_i^\pm\).
The same inclusions show that every nontrivial reduced word \(w\)
satisfies
\[
w(U)\cap U=\varnothing.
\]
If distinct elements \(w_m\) converged to the identity in
\(\PGL_n(K)\), then \(w_m(U)\cap U\neq\varnothing\) for all sufficiently
large \(m\), a contradiction.  Hence the subgroup is discrete.
\end{proof}

\begin{proposition}\label{prop:free_sp}
For every \(r\ge2\) there are elements
\[
\gamma_1,\ldots,\gamma_r\in\PSp_4(K)
\]
which freely generate a discrete subgroup.
\end{proposition}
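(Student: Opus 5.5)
Conjugate the biproximal element \(a\) by suitable elements of \(\Sp_4(K)\) to get \(r\) biproximal symplectic elements in general position, and then apply Lemma~\ref{lem:pingpong}.

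Set \(h_i\in\Sp_4(K)\) for \(i=1,\ldots,r\), and put \(g_i=h_iah_i^{-1}\). Each \(g_i\) lies in \(\Sp_4(K)\). Its projective image is biproximal, with the following data:
\begin{itemize}[label=\(\cdot\)]
\item attracting point \([h_ie_1]\) and repelling hyperplane \(\Pp(h_i\langle e_2,e_3,e_4\rangle)\);
\item for \(g_i^{-1}\), attracting point \([h_ie_4]\) and repelling hyperplane \(\Pp(h_i\langle e_1,e_2,e_3\rangle)\).
\end{itemize}
Since \(\langle e_2,e_3,e_4\rangle=e_4^{\perp_\omega}\) and \(\langle e_1,e_2,e_3\rangle=e_1^{\perp_\omega}\), the repelling hyperplane of \(g_i^{\pm1}\) is the \(\omega\)-orthogonal of the attracting point of \(g_i^{\mp1}\). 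Write \(x_i^+=[h_ie_1]\) and \(x_i^-=[h_ie_4]\). The hypotheses of Lemma~\ref{lem:pingpong} then become:
\begin{itemize}[label=\(\cdot\)]
\item the \(2r\) points \(x_i^\pm\) are pairwise distinct;
\item \(\omega(x_i^\varepsilon,x_j^{\delta})\neq0\) whenever \((j,\delta)\neq(i,\varepsilon)\) and the two letters are not inverse.
\end{itemize}
For \(j=i\) and \(\delta=-\varepsilon\) the letters are inverse, so no condition is needed. In the remaining case \((j,\delta)=(i,\varepsilon)\), the requirement that \(x_i^\varepsilon\notin (x_i^{-\varepsilon})^{\perp_\omega}\) holds automatically, since \(\omega(e_1,e_4)=1\).

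It therefore suffices to choose pairs \((x_i^+,x_i^-)\) of \(K\)-rational points with \(\omega(x_i^+,x_i^-)\neq0\) such that all \(2r\) points are distinct and pairwise non-orthogonal across different indices. I will use the following facts:
\begin{itemize}[label=\(\cdot\)]
\item \(\Sp_4(K)\) acts transitively on pairs \((u,v)\) with \(\omega(u,v)=1\), since such a pair extends to a symplectic basis. So any such pair is of the form \((h_ie_1,h_ie_4)\).
\item Non-orthogonality \(\omega(u,v)\neq0\) and distinctness are Zariski-open conditions on \((u,v)\in V\times V\).
\item \(K\) is infinite, so \(V(K)^{2r}\) is Zariski dense in \(V^{2r}\).
\end{itemize}
Hence the finitely many nonempty open conditions admit a common \(K\)-point. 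Nonemptiness of each condition is clear: for distinct indices, \(\omega(u,v)\neq0\) is a nonempty open condition on \((u,v)\). Concretely, one can take \(u_i\) inductively outside the finitely many hyperplanes \(u_j^\perp\), \(v_j^\perp\) already chosen, and then \(v_i\) outside the corresponding finite union of hyperplanes together with \(u_i^\perp\). After rescaling \(v_i\), we get \(\omega(u_i,v_i)=1\). Avoiding finitely many hyperplanes is possible because \(K\) is infinite.

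With these choices, Lemma~\ref{lem:pingpong} provides \(N\) such that \(\gamma_i=g_i^N\) freely generate a discrete subgroup of \(\PGL_4(K)\). These elements lie in \(\PSp_4(K)\), and discreteness passes to \(\PSp_4(K)\) with the subspace topology, so the proposition follows.

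The main obstacle is the bookkeeping of the ping--pong hypotheses. One has to correctly identify the repelling hyperplanes as symplectic orthogonals, and check that the same-index condition follows from \(\omega(e_1,e_4)\neq0\) rather than being excluded. After that, the general-position choice is routine.
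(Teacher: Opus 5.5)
Your proof is correct and has the same skeleton as the paper's. You conjugate \(a\) by elements \(h_i\in\Sp_4(K)\), place the attracting points and repelling hyperplanes in ping--pong position, and pass to a common power via Lemma~\ref{lem:pingpong}.

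The two routes differ only in the general-position step.
\begin{itemize}
\item The paper asserts that each forbidden equality or incidence cuts out a proper Zariski closed subset of \(\Sp_4^r\), supporting properness with a brief appeal to transitivity. It then uses Zariski density of \(\Sp_4(K)^r\), which needs \(\Sp_4\) to be connected and \(K\)-rational.
\item You observe that the repelling hyperplane of \(g_i^{\pm1}\) is \((x_i^{\mp})^{\perp_\omega}\). Every hypothesis therefore becomes non-orthogonality of the vectors \(h_ie_1,h_ie_4\). You pick these vectors by avoiding finitely many hyperplanes of \(V(K)\), and realize them using transitivity of \(\Sp_4(K)\) on pairs with \(\omega(u,v)=1\).
\end{itemize}
The paper's argument is shorter and adapts with little change to other groups. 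Yours is elementary and makes each properness claim explicit. It also shows that the distinctness hypothesis of the lemma is redundant here, since \(\omega(x,x)=0\).

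One cosmetic slip: your second bullet mixes two indexings. After substituting the repelling hyperplanes, the condition is simply \(\omega(x_i^\varepsilon,x_j^\delta)\neq0\) for all \((j,\delta)\neq(i,\varepsilon)\). In that form, the case \(j=i\), \(\delta=-\varepsilon\) is the one that holds automatically. The sentences that follow revert to the lemma's original indexing, but the conditions you finally impose are the right ones.
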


\begin{proof}
Take \(r\) symplectic conjugates
\[
a_i=h_i a h_i^{-1},\qquad h_i\in\Sp_4(K),
\]
with attracting points and repelling hyperplanes in the transverse
position required by Lemma~\ref{lem:pingpong}. Each equality between attracting
points or incidence
\[
x_i^\varepsilon\in H_j^\delta
\]
excluded by the lemma defines a proper Zariski closed subset of \(\Sp_4^r\).  For the incidence
conditions, after the other \(h_\nu\) are fixed, transitivity of
\(\Sp_4\) on the relevant symplectic flag data permits \(h_i\) to
move the eigenline away from the prescribed hyperplane; equality of attracting points is excluded in the same way.  The complement of the finite
union of these proper closed subsets is therefore a nonempty Zariski
open subset of \(\Sp_4^r\).  Since \(K\) is infinite and \(\Sp_4\) is a connected
\(K\)-rational algebraic group, \(\Sp_4(K)^r\) is Zariski dense and this
open set contains a \(K\)-rational tuple.  A sufficiently large common
power of the \(a_i\) then satisfies the lemma.  Since all generators lie in \(\PSp_4(K)\), so does the group
they generate.
\end{proof}

\section{Drinfeld upper half-space and contact quotients}

Let \(E\) be a finite-dimensional \(K\)-vector space, \(d=\dim E\ge2\),
and let \(\widehat\Omega_{\mathrm{Dr}}(E)\) denote the
Drinfeld--Mustafin formal model of \(\Omega_{\mathrm{Dr}}(E)\) over
\(\mathcal O_K\); see \cite{Drinfeld,Mustafin}.  Berkovich proves that
this formal scheme is strictly poly-stable and that its polysimplicial
set realizes the Bruhat--Tits building of \(\PGL(E)\); see
\cite[Remark~5.12(iii)]{Berkovich1999} and \cite{RTW}.

\begin{proposition}\label{prop:drinfeld_action}
Let \(E\) be as above and let
\(\Gamma\subset\PGL(E)(K)\) be torsion-free and discrete.  Then:
\begin{enumerate}[label=\textup{(\alph*)}]
\item \(\Gamma\) acts freely and properly discontinuously on
\(\Omega_{\mathrm{Dr}}(E)\);
\item the quotient
\(\Gamma\backslash\Omega_{\mathrm{Dr}}(E)\) exists as a smooth
\((d-1)\)-dimensional \(K\)-analytic space;
\item its underlying Berkovich space admits a strong deformation
retraction onto a closed subspace canonically homeomorphic to
\[
\Gamma\backslash\mathscr B(\PGL(E),K).
\]
\end{enumerate}
\end{proposition}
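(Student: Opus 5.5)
The plan is to reduce all three assertions to the Bruhat--Tits building \(\mathscr B=\mathscr B(\PGL(E),K)\) through the \(\PGL(E)(K)\)-equivariant retraction
\[
\tau:\Omega_{\mathrm{Dr}}(E)\longrightarrow\mathscr B
\]
furnished by the Drinfeld--Mustafin formal model \(\widehat\Omega_{\mathrm{Dr}}(E)\). Berkovich's theory of strictly poly-stable formal schemes \cite{Berkovich1999} supplies a skeleton \(S\subset\Omega_{\mathrm{Dr}}(E)\) and a strong deformation retraction \(\Phi:\Omega_{\mathrm{Dr}}(E)\times[0,1]\to\Omega_{\mathrm{Dr}}(E)\) onto \(S\). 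By \cite[Remark~5.12(iii)]{Berkovich1999} and \cite{RTW}, \(S\) is identified with \(\mathscr B\). Since \(\PGL(E)(K)\) acts on the formal model by automorphisms, the canonical construction makes \(S\), \(\tau=\Phi(\cdot,1)\), and \(\Phi\) equivariant. I take this equivariance as the input from the literature and verify everything else.

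For (a), I use that \(\Gamma\) is discrete in \(\PGL(E)(K)\) and that stabilizers of points and of bounded sets in \(\mathscr B\) are compact open. It follows that \(\Gamma\) acts on \(\mathscr B\) properly discontinuously, and that each \(\Gamma\)-stabilizer of a polysimplex is a finite group. Because \(\Gamma\) is torsion-free, these finite stabilizers are trivial, so \(\Gamma\) acts freely on \(\mathscr B\). Freeness on \(\Omega_{\mathrm{Dr}}(E)\) then follows by equivariance: if \(\gamma x=x\), then \(\gamma\tau(x)=\tau(x)\), hence \(\gamma=1\).

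Proper discontinuity on \(\Omega_{\mathrm{Dr}}(E)\) also follows from \(\tau\). The preimages \(\tau^{-1}(\operatorname{St}(\sigma))\) of open stars of vertices are open, and they are finite unions of affinoid pieces (the generic fibres of the open formal subschemes attached to stars). After passing to a \(\Gamma\)-equivariant subdivision fine enough that \(\gamma\operatorname{St}(\sigma)\cap\operatorname{St}(\sigma)=\varnothing\) for \(\gamma\ne1\), these give a \(\Gamma\)-stable cover by open sets \(U\) with \(\gamma U\cap U=\varnothing\) for all \(\gamma\neq1\). Such a subdivision exists because the action on \(\mathscr B\) is free and properly discontinuous; equivalently, the barycentric subdivision works, since \(\Gamma\) acts without inversions once it acts freely.

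For (b), the cover from (a) lets me glue the pieces along \(\Gamma\)-translates, producing a \(K\)-analytic space \(\Gamma\backslash\Omega_{\mathrm{Dr}}(E)\) for which the projection is a local isomorphism. Alternatively, \(\Gamma\backslash\widehat\Omega_{\mathrm{Dr}}(E)\) exists as a formal scheme, because \(\Gamma\) acts freely on the strata of its special fibre and discontinuously on the underlying building. I then take its generic fibre, which is what Mustafin does for Schottky groups. Smoothness and the dimension \(d-1\) are local properties, so they are inherited from \(\Omega_{\mathrm{Dr}}(E)\), which is an open subset of \(\Pp^{d-1,\an}\).

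For (c), the equivariant deformation retraction \(\Phi\) descends to \(\Gamma\backslash\Omega_{\mathrm{Dr}}(E)\). It is continuous on the quotient because the projection is open and a local homeomorphism. Its image is \(\Gamma\backslash S\simeq\Gamma\backslash\mathscr B\); this subspace is closed because \(S\) is closed and \(\Gamma\)-stable. Alternatively, I can apply Berkovich's retraction directly to the poly-stable quotient formal scheme, whose skeleton is \(\Gamma\backslash\mathscr B\).

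The main obstacle is the equivariance and compatibility of Berkovich's skeleton and retraction with the \(\Gamma\)-action, and with the passage to the quotient formal scheme. My first approach is to derive equivariance from functoriality of the construction in \cite{Berkovich1999} under automorphisms of poly-stable formal schemes. I then check that the quotient formal scheme is again strictly poly-stable, which is local and holds since the projection is étale.
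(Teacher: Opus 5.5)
Your main line is correct. It follows the paper for (a) and for the chart-gluing in (b): freeness and properness on \(\mathscr B(\PGL(E),K)\) come from compact stabilizers, discreteness and torsion-freeness, and they pass to \(\Omega_{\mathrm{Dr}}(E)\) through the equivariant retraction \(\tau\). It differs from the paper in two places.

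First, the paper uses a compact-set argument: if \(\gamma C\cap C\neq\varnothing\) then \(\gamma\tau(C)\cap\tau(C)\neq\varnothing\), and properness on the building leaves finitely many \(\gamma\). You instead produce explicit wandering open sets \(\tau^{-1}(\operatorname{St}(b_\sigma))\) from the barycentric subdivision. This is right: \(\operatorname{St}(b_\sigma)\) and \(\operatorname{St}(b_{\gamma\sigma})\) meet only if \(\sigma\) and \(\gamma\sigma\) lie in a common chain, which forces \(\gamma\sigma=\sigma\) and hence \(\gamma=1\). However, wandering neighborhoods alone give only a covering-space action, not a Hausdorff quotient. You still need the properness statement for compact sets before the glued space in (b) is a separated \(K\)-analytic space; this is how the paper gets Hausdorffness. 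It follows at once from your ingredients, since the stars have compact closure and \(\Gamma\) acts properly on the building.

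Second, for (c) the paper cites Berkovich's Corollary~8.6, whereas you descend the equivariant homotopy \(\Phi\) by hand. That works: \(\pi\) open makes \(\pi\times\mathrm{id}_{[0,1]}\) a quotient map, and \(\Gamma S=S\) closed makes \(\pi(S)\simeq\Gamma\backslash\mathscr B\) closed. It also avoids any quotient formal model. The price is that you need equivariance of the whole homotopy \(\Phi\), not only of \(\tau\); Berkovich's compatibility with \'etale morphisms supplies this.

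Two side remarks should be removed.

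\begin{itemize}
\item The set \(\tau^{-1}(\operatorname{St}(v))\) is the tube of the component \(Y_v\) of the special fibre. It is open but not compact (for \(d=2\) it contains open annuli), so it is neither a finite union of affinoids nor the generic fibre of an open formal subscheme. After subdivision the stars do not come from the model at all; only continuity of \(\tau\) matters.
\item The alternative via \(\Gamma\backslash\widehat\Omega_{\mathrm{Dr}}(E)\) is not justified as stated. Suppose some \(\gamma\neq1\) moves a vertex \(v\) to an adjacent vertex, e.g.\ \(\operatorname{diag}(\pi,1)\) for \(d=2\). Then no Zariski neighborhood of a point of \(Y_v\cap Y_{\gamma v}\) is disjoint from its \(\gamma\)-translate, so the projection is not a local isomorphism. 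An \'etale quotient is then only poly-stable, not strictly poly-stable, because components self-intersect.
\end{itemize}

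Your main route does not depend on either claim.
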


\begin{proof}
The locally finite Bruhat--Tits building is a proper metric space,
and the action of \(\PGL(E)(K)\) is proper, with compact point
stabilizers.  Since
\(\Gamma\) is discrete, its intersection with any such compact
subgroup is finite, and it is therefore trivial because \(\Gamma\)
is torsion-free.  Hence the action on the building is free and proper.

The Drinfeld--Mustafin formal model is \(\PGL(E)(K)\)-equivariant.
In particular, if an element of \(\Gamma\) fixes a point of
\(\Omega_{\mathrm{Dr}}(E)\), equivariance of the Berkovich retraction
forces it to fix the corresponding point of the building; hence it is
the identity.  Thus the analytic action is free.  For a compact subset \(C\) of the analytic domain, its image
under the continuous equivariant retraction is compact in the building.
If \(\gamma C\cap C\ne\varnothing\), the two corresponding compact
images also intersect. Properness on the building allows only finitely
many such \(\gamma\), proving properness of the analytic action.
Freeness and local compactness then give, at each point, an open
neighborhood disjoint from all its nonidentity translates.

The disjoint neighborhoods just obtained give analytic charts for
\(\Gamma\backslash\Omega_{\mathrm{Dr}}(E)\); transition maps are
restrictions of elements of \(\Gamma\). Properness makes the quotient
Hausdorff. The quotient map is locally an analytic isomorphism, so the
quotient is smooth of dimension \(d-1\). This also follows from the
analytic quotient construction in \cite{Berkovich1995}; see the discussion
after \cite[Corollary~8.6]{Berkovich1999}.

Finally, \cite[Remark~5.12(iii)]{Berkovich1999} identifies the
polysimplicial set of the formal model with the Bruhat--Tits building,
and \cite[Corollary~8.6]{Berkovich1999} gives the strong deformation
retraction of the quotient onto
\[
\Gamma\backslash\mathscr B(\PGL(E),K).
\]
\end{proof}

\begin{proof}[Proof of the contact and topological assertions of Corollary~\ref{thm:C}]
Let \(\Gamma\) be a torsion-free discrete symplectic group.
Proposition~\ref{prop:drinfeld_action}, applied to \(E=V\), gives the
free proper action, the smooth quotient, and the strong deformation
retraction onto \(\Gamma\backslash\mathscr B(\PGL(V),K)\).

The contact distribution \(\mathcal H\subset T_{\Pp^3_K}\) is preserved
by \(\PSp(K^4,\omega)\), hence its restriction to
\(\Omega_{\Tcal}\) is \(\Gamma\)-invariant.  Since the
quotient map is locally an analytic isomorphism, \(\mathcal H\)
descends uniquely to a rank-two analytic subbundle of \(T_{Y_\Gamma}\).
The contact condition is local and is preserved under pullback by local
analytic isomorphisms, so the descended distribution is again contact.

For every \(r\ge2\), Proposition~\ref{prop:free_sp} supplies a group
\(\Gamma\simeq F_r\). The Bruhat--Tits building is a contractible locally
finite polysimplicial complex. The action of this \(\Gamma\) is free and
proper by the proof of Proposition~\ref{prop:drinfeld_action}.
Therefore
\[
\Gamma\backslash\mathscr B
\]
is a classifying space \(B\Gamma\), hence an Eilenberg--MacLane space
of type \(K(\Gamma,1)\).  The strong deformation retraction gives
\[
\pi_1(Y_\Gamma)\simeq\Gamma\simeq F_r
\]
and
\[
H_1(Y_\Gamma,\mathbf Z)
\simeq
\Gamma^{\mathrm{ab}}
\simeq
\mathbf Z^r.
\]
Since a bouquet of \(r\) circles is also a classifying space for
\(F_r\), the two classifying CW complexes are homotopy equivalent.
Thus \(Y_\Gamma\) is homotopy equivalent to a bouquet of \(r\) circles.
\end{proof}

\begin{remark}
Every projective line defined over \(K\) is contained in a
\(K\)-rational hyperplane. Its analytification is therefore disjoint
from \(\Omega_{\Tcal}\).
\end{remark}

\section{Descent of the incidence correspondence}\label{sec:descent}

\begin{lemma}\label{lem:generic-analytic-point}
Let \(L\) be a complete non-Archimedean extension of \(K\), and let
\(X\) be an integral projective \(L\)-variety.  There exists a
Berkovich analytic point of \(X^{\an}\) whose image in the underlying
scheme \(X\) is the generic point.  Consequently such a point belongs
to the complement of every proper closed algebraic subset of \(X\).
\end{lemma}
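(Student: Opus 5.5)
To produce the required point, we will construct a multiplicative seminorm on a coordinate ring of an affine open subset of \(X\) whose kernel is zero, and then pass to the projective closure. Points of \(X^{\an}\) over an affine open \(U=\operatorname{Spec}A\subset X\) are bounded multiplicative seminorms on \(A\) extending the absolute value of \(L\). The image of such a point \(x\) in the scheme \(X\) is the prime ideal \(\ker|\cdot|_x\). It therefore suffices to find a multiplicative seminorm on \(A\) with trivial kernel, that is, a multiplicative \emph{norm} extending \(|\cdot|_L\).

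First, since \(X\) is integral, we may choose a nonempty affine open \(U=\operatorname{Spec}A\). The ring \(A\) is a domain of finite type over \(L\) with \(\dim A=n=\dim X\). By Noether normalization we obtain a finite injective map \(L[t_1,\ldots,t_n]\hookrightarrow A\). The Gauss norm \(\bigl|\sum a_\nu t^\nu\bigr|=\max|a_\nu|\) is a multiplicative norm on \(L[t_1,\ldots,t_n]\); this follows from Gauss's lemma for non-Archimedean absolute values. It extends to the fraction field \(L(t_1,\ldots,t_n)\). This gives a point \(\eta\) of \((\mathbf A^n_L)^{\an}\) lying over the generic point, with completed residue field \(\mathscr H(\eta)\).

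Second, we lift \(\eta\) to a point of \(U^{\an}\). The finite morphism \(U\to\mathbf A^n_L\) analytifies to a finite, hence surjective, map \(U^{\an}\to(\mathbf A^n_L)^{\an}\). Its fiber over \(\eta\) is \(\mathcal M(A\otimes_{L[t]}\mathscr H(\eta))\). This is the spectrum of a nonzero finite \(\mathscr H(\eta)\)-algebra, and it is nonempty. Concretely, the fraction field \(\operatorname{Frac}A\) is a finite extension of \(L(t)\). Any maximal ideal of \(\operatorname{Frac}A\otimes_{L(t)}\mathscr H(\eta)\) gives a finite field extension of the complete field \(\mathscr H(\eta)\). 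The absolute value extends uniquely to this extension, and restricting it to \(A\subset\operatorname{Frac}A\) gives a multiplicative seminorm that is injective on \(A\). Its kernel is therefore \(0\), so the corresponding point \(x\in U^{\an}\subset X^{\an}\) maps to the generic point of \(X\).

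The main obstacle is to justify that the fiber over \(\eta\) actually contains a point coming from \(\operatorname{Frac}A\), rather than only from some quotient of \(A\). The field-extension route above handles this, because we extend the valuation on the field \(L(t)\) to the field \(\operatorname{Frac}A\) directly. We only need completeness of \(\mathscr H(\eta)\) and the standard existence of extensions of absolute values to finite field extensions.

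The consequence then follows formally. If \(Z\subsetneq X\) is a proper closed algebraic subset, it does not contain the generic point. Since \(Z^{\an}\) consists exactly of the points of \(X^{\an}\) whose scheme-theoretic image lies in \(Z\), we get \(x\notin Z^{\an}\).
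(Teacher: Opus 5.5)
Your proof is correct and follows essentially the paper's route: put the Gauss absolute value on a purely transcendental subfield \(L(t_1,\ldots,t_n)\) of the function field, then extend it to the finite extension \(\operatorname{Frac}A=L(X)\). This yields a point whose kernel on \(A\) is zero. Noether normalization is more than you need, since any transcendence basis suffices. Passing through the completion \(\mathscr H(\eta)\) and a residue field of \(\operatorname{Frac}A\otimes_{L(t)}\mathscr H(\eta)\) usefully spells out the extension step that the paper states in one line. One minor correction: points of \(U^{\an}\) are simply multiplicative seminorms on \(A\) extending \(|\cdot|_L\), with no boundedness condition.
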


\begin{proof}
Choose a transcendence basis \(t_1,\ldots,t_m\) of \(L(X)/L\).
Equip \(L(t_1,\ldots,t_m)\) with the multiplicative Gauss absolute
value, defined on polynomials by the maximum absolute value of their
coefficients. Extend it to the finite algebraic extension \(L(X)\)
and take the completion \(F\). This gives an isometric embedding
\(L\hookrightarrow F\) and an injective field map \(L(X)\hookrightarrow F\).
The resulting morphism \(\operatorname{Spec}F\to X\) defines a
Berkovich point over the generic point. Its kernel on every affine
coordinate ring meeting the generic point is zero, so it lies in no
proper algebraic closed subset.
\end{proof}

\begin{lemma}\label{lem:OmegaM-openorbit}
The domain \(\Omega_{\Mcal}\) is nonempty. For every nondegenerate
alternating form \(\omega\) on \(V\), it satisfies
\[
\Omega_{\Mcal}\subset(\Mcal^\circ)^{\an}.
\]
\end{lemma}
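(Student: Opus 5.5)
The proof has two independent parts: showing that the domain is nonempty, and showing it avoids the Lagrangian locus.

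\emph{Nonemptiness.} We apply Lemma~\ref{lem:generic-analytic-point} with \(L=K\) and \(X=\Mcal=\Gr(2,V)\), which is integral and projective over \(K\). This gives a point \(x\in\Mcal^{\an}\) lying over the generic point of \(\Mcal\). We then check that \(x\in\Omega_{\mathrm{Dr}}(\Lambda^2V)\). Suppose \(H\subset\Lambda^2V\) is a \(K\)-rational hyperplane. Then \(\Mcal\cap\Pp(H)\) is a hyperplane section of the Klein quadric \(Q^4\), which spans \(\Pp(\Lambda^2V)\). So the linear form cutting out \(H\) does not vanish identically on \(\Mcal\), and \(\Mcal\cap\Pp(H)\) is a proper closed subset of \(\Mcal\). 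By Lemma~\ref{lem:generic-analytic-point}, \(x\) lies in no such subset, so \(x\) avoids every \(\Pp(H)^{\an}\). Equivalently, in the criterion of the introduction, the Plücker coordinates of \(x\) in \(\mathscr H(x)\) are \(K\)-linearly independent, because the Plücker coordinates are \(K\)-linearly independent as functions on \(\Mcal\) (the quadric is nondegenerate).

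\emph{Inclusion.} Fix a nondegenerate alternating form \(\omega\). By the decomposition \(\Gr(2,V)^{\an}=(\Mcal^\circ)^{\an}\sqcup\Dcal^{\an}\), it suffices to show \(\Omega_{\Mcal}\cap\Dcal^{\an}=\varnothing\). The key observation is that \(\Dcal\) lies in a \(K\)-rational hyperplane of \(\Pp(\Lambda^2V)\). In the symplectic basis of Section~\ref{sec:orbits}, the contraction \(c_\omega(\alpha)=p_{14}+p_{23}\) is a nonzero \(K\)-linear form on \(\Lambda^2V\). By Proposition~\ref{prop:orbits}, a decomposable bivector \(\alpha\) represents a Lagrangian plane exactly when \(c_\omega(\alpha)=0\). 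Indeed, \(c_\omega\) evaluates \(\omega\) on the plane: \(c_\omega(v\wedge w)=\omega(v,w)\) up to a fixed nonzero normalization. Hence
\[
\Dcal=\Mcal\cap\Pp(\ker c_\omega),
\]
where \(\ker c_\omega=E\) is a \(K\)-rational hyperplane. By definition, \(\Omega_{\mathrm{Dr}}(\Lambda^2V)\) removes \(\Pp(E)^{\an}\). Therefore every point of \(\Omega_{\Mcal}\) lies in \(\Mcal^{\an}\setminus\Pp(E)^{\an}=(\Mcal\setminus\Dcal)^{\an}=(\Mcal^\circ)^{\an}\).

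\emph{Main subtlety.} The step needing care is the compatibility of analytification with open and closed immersions: \((\Mcal\cap\Pp(E))^{\an}=\Mcal^{\an}\cap\Pp(E)^{\an}\) set-theoretically, and the complement of a closed analytic subset is the analytification of the open complement. Both follow from analytification commuting with fiber products and preserving open and closed immersions \cite[\S3.4]{Berkovich1990}. The remaining point is verifying \(c_\omega(v\wedge w)=\omega(v,w)\) in coordinates. This is a direct check: for \(v\wedge w=\sum p_{ij}e_i\wedge e_j\) with \(p_{ij}=v_iw_j-v_jw_i\), the pairing \(\omega(v,w)=p_{14}+p_{23}\) given \(\omega(e_1,e_4)=\omega(e_2,e_3)=1\).
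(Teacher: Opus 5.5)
Your proof is correct and follows the paper's argument. The Lagrangian locus is the section of the Klein quadric by the \(K\)-rational hyperplane \(\Pp(\ker c_\omega)\), which the Drinfeld domain removes, and nonemptiness comes from applying Lemma~\ref{lem:generic-analytic-point} to the generic point of the Klein quadric, which lies in no \(K\)-rational hyperplane.
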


\begin{proof}
The symplectic form defines the nonzero \(K\)-linear functional
\[
c_\omega:\Lambda^2V\longrightarrow K,
\qquad
v\wedge w\longmapsto\omega(v,w).
\]
Its projectivized kernel is a \(K\)-rational hyperplane in
\(\Pp(\Lambda^2V)\), and
\[
\LG(2,V)
=
\Gr(2,V)\cap\Pp(\ker c_\omega).
\]
Since \(\Omega_{\mathrm{Dr}}(\Lambda^2V)\) avoids every
\(K\)-rational hyperplane,
\[
\Omega_{\Mcal}\cap\LG(2,V)^{\an}=\varnothing.
\]
Hence
\[
\Omega_{\Mcal}\subset(\Mcal^\circ)^{\an}.
\]

For nonemptiness, the Klein quadric \(\Gr(2,V)\subset
\Pp(\Lambda^2V)\) is integral and is not contained in any hyperplane.
Apply Lemma~\ref{lem:generic-analytic-point} to its generic point.  No
nonzero \(K\)-linear form on \(\Lambda^2V\) vanishes there, so the
resulting analytic point lies in
\(\Omega_{\mathrm{Dr}}(\Lambda^2V)\).
\end{proof}

\begin{lemma}\label{lem:q-surj}
The map
\[
q:\Omega_{\Fcal}\longrightarrow\Omega_{\Mcal}
\]
is surjective.
\end{lemma}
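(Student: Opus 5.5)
To show that \(q\) is surjective, I will fix a point \(x\in\Omega_{\Mcal}\) and build a point of \(\Omega_{\Fcal}\) over it. The point is found inside the analytic fibre \((q^{\an})^{-1}(x)\simeq(\Pp^1_{\mathscr H(x)})^{\an}\) of Proposition~\ref{prop:analytic_fibers}, by taking the generic point of that projective line in the sense of Lemma~\ref{lem:generic-analytic-point}. Since \(q\) of any point of this fibre is \(x\in\Omega_{\Mcal}\), it suffices to show that its image under \(p^{\an}\) lies in \(\Omega_{\Tcal}\).

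\emph{Step 1: translate \(x\) into linear algebra.} Put \(F=\mathscr H(x)\). The point \(x\) corresponds to an \(F\)-rational two-plane \(W\subset V\otimes_KF\), spanned by vectors \(w_1,w_2\). Write \(\alpha=w_1\wedge w_2\) in a \(K\)-basis of \(\Lambda^2V\). By the criterion recalled in the introduction, the statement \(x\in\Omega_{\mathrm{Dr}}(\Lambda^2V)\) means that no nonzero \(K\)-linear form on \(\Lambda^2V\) vanishes on \(\alpha\).

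\emph{Step 2: the generic point of the fibre.} Lemma~\ref{lem:generic-analytic-point}, applied to \(\Pp(W)\simeq\Pp^1_F\), gives a complete extension \(F'=\widehat{F(T)}\) with the Gauss norm, with \(T\) transcendental over \(F\). Take the \(F'\)-point \(v=w_1+Tw_2\in W\otimes_FF'\). It defines a Berkovich point \(y\in\Fcal^{\an}\) with \(q^{\an}(y)=x\), because \(\operatorname{Spec}F'\to\Fcal\to\Mcal\) factors through \(\operatorname{Spec}F\to\Mcal\). Its image \(p^{\an}(y)\) is the line \([v]\).

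\emph{Step 3: \([v]\in\Omega_{\Tcal}\).} By the same coordinate criterion, it suffices to show that \(\lambda(v)\ne0\) for every nonzero \(K\)-linear form \(\lambda\) on \(V\). Now \(\lambda(v)=\lambda(w_1)+T\lambda(w_2)\), with \(\lambda(w_i)\in F\). Since \(T\) is transcendental over \(F\), this vanishes only if \(\lambda(w_1)=\lambda(w_2)=0\), that is, only if \(W\subset(\ker\lambda)\otimes_KF\). In that case \(\alpha\in\Lambda^2(\ker\lambda)\otimes_KF\). But \(\Lambda^2(\ker\lambda)\) is a proper \(K\)-subspace of \(\Lambda^2V\), of dimension \(3<6\), so some nonzero \(K\)-linear form on \(\Lambda^2V\) kills \(\alpha\). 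This contradicts Step 1. Hence \(y\in(p^{\an})^{-1}(\Omega_{\Tcal})\cap(q^{\an})^{-1}(\Omega_{\Mcal})=\Omega_{\Fcal}\) and \(q(y)=x\).

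The main delicacy is bookkeeping rather than algebra. One must check that the linear-independence criterion for \(\Omega_{\mathrm{Dr}}\) applies to points given over arbitrary complete extensions such as \(F'\), independently of the chosen representative. One must also check that the Berkovich point attached to an \(F'\)-valued point really maps to \(x\) under \(q^{\an}\). I would handle both by noting that the Berkovich point determined by a \(K\)-morphism \(\operatorname{Spec}F'\to X\) depends only on its image point and the induced seminorm, and that linear (in)dependence over \(K\) of the coordinates is unchanged under the isometric embedding \(F\hookrightarrow F'\).
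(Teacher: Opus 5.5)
Your proof is correct and follows essentially the same route as the paper's: both show \(W_x\not\subset H_F\) for every \(K\)-rational hyperplane \(H\subset V\), since otherwise the Pl\"ucker vector would lie in \((\Lambda^2H)_F\), which sits inside a \(K\)-rational hyperplane of \(\Lambda^2V\), and both then take the generic point of \(\Pp(W_x)\) supplied by Lemma~\ref{lem:generic-analytic-point}. Your explicit point \(w_1+Tw_2\) over \(\widehat{F(T)}\) is a concrete realization of that generic point. Your transcendence check that \(\lambda(w_1)+T\lambda(w_2)\ne0\) plays the role of the paper's remark that such a point avoids every proper closed subset \(\Pp(W_x)\cap\Pp(H_F)\).
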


\begin{proof}
Let \(x\in\Omega_{\Mcal}\), put \(E=\mathscr H(x)\), and let
\[
W_x\subset V_E:=V\otimes_KE
\]
be the two-dimensional \(E\)-subspace represented by the
\(E\)-rational point induced by \(x\).

For every \(K\)-rational hyperplane \(H\subset V\),
\[
W_x\not\subset H_E.
\]
Indeed, if \(W_x\subset H_E\), then its Pl\"ucker line
\(\Lambda^2W_x\) lies in \((\Lambda^2H)_E\).  The projective plane
\(\Pp(\Lambda^2H)\subset\Pp(\Lambda^2V)\) is contained in a
\(K\)-rational hyperplane of \(\Pp(\Lambda^2V)\), contradicting
\(x\in\Omega_{\mathrm{Dr}}(\Lambda^2V)\).

Thus, for every \(K\)-rational hyperplane \(H\subset V\),
\[
\Pp(W_x)\cap\Pp(H_E)
\]
is a proper closed subset of the projective line \(\Pp(W_x)\).
By Lemma~\ref{lem:generic-analytic-point}, choose an analytic point
\(z\in\Pp(W_x)^{\an}\) lying over its generic point.  Then \(z\) lies
in none of these proper intersections.  Its image in
\(\Pp(V)^{\an}\) therefore belongs to
\(\Omega_{\Tcal}\).  The flag determined by \(z\subset W_x\) lies in
\(\Omega_{\Fcal}\) and maps to \(x\).
\end{proof}

\begin{lemma}\label{lem:p-surj}
The map
\[
p:\Omega_{\Fcal}\longrightarrow\Omega_{\Tcal}
\]
is surjective.
\end{lemma}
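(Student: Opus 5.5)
The plan mirrors the proof of Lemma~\ref{lem:q-surj}, with the roles of point and plane exchanged. Let \(z\in\Omega_{\Tcal}\), put \(E=\mathscr H(z)\), and let \(L_z\subset V_E\) be the \(E\)-line given by the \(E\)-rational point induced by \(z\). By the description of \(\Omega_{\mathrm{Dr}}(V)\) in the introduction, \(L_z\) lies in no \(H_E\) for a \(K\)-rational hyperplane \(H\subset V\). I will find a two-plane \(W\supset L_z\), defined over some complete extension \(F/E\), whose Pl\"ucker point avoids every \(K\)-rational hyperplane of \(\Pp(\Lambda^2V)\). The flag \(L_z\subset W\) then gives a point of \(\Omega_{\Fcal}\) over \(z\).

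The fiber \(p^{-1}(z)\simeq\Pp(V_E/L_z)\simeq\Pp^2_E\) is integral and projective over \(E\) (Proposition~\ref{prop:analytic_fibers}). Every \(K\)-linear form \(\lambda\ne0\) on \(\Lambda^2V\) defines a closed subset \(Z_\lambda=\{W\supset L_z:\lambda(\Lambda^2W)=0\}\) of this \(\Pp^2_E\). Once each \(Z_\lambda\) is known to be proper, Lemma~\ref{lem:generic-analytic-point} supplies an analytic point \(y\) of \((\Pp^2_E)^{\an}\) over the generic point. This \(y\) avoids all the countably many \(Z_\lambda\), so its image lies in \((q^{\an})^{-1}(\Omega_{\Mcal})\). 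Since it lies over \(z\), it belongs to \(\Omega_{\Fcal}\), which proves surjectivity.

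The main step is showing that \(Z_\lambda\neq\Pp(V_E/L_z)\). Write \(L_z=Ev\). The condition says \(\lambda(v\wedge w)=0\) for all \(w\in V_E\), so \(Z_\lambda\) is everything exactly when the linear form \(\iota_v\lambda:w\mapsto\lambda(v\wedge w)\) on \(V_E\) vanishes. View \(\lambda\) as an alternating bilinear form on \(V\) defined over \(K\). Then \(\iota_v\lambda=0\) means \(v\in\ker(\lambda)_E\), where \(\ker\lambda=\{u\in V:\lambda(u,\cdot)=0\}\) is a \(K\)-subspace. Because \(\lambda\neq0\), this kernel is proper, hence contained in a \(K\)-rational hyperplane \(H\). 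So \(\iota_v\lambda=0\) would force \(L_z\subset H_E\), contradicting \(z\in\Omega_{\Tcal}\). Therefore each \(Z_\lambda\) is a proper closed subset, in fact a hyperplane \(\Pp((\iota_v\lambda)^{\perp}/L_z)\) of the fiber.

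Finally, I expect the step needing the most care to be the bookkeeping that the analytic point \(y\), with completed residue field \(F\supset E\), really gives a flag in \(\Omega_{\Fcal}\). Its \(p\)-image must be \(z\), which holds because \(y\) lies in \((p^{\an})^{-1}(z)\). Its \(q\)-image \([\Lambda^2W_F]\) must avoid every \(K\)-rational hyperplane \(\Pp(\ker\lambda)\), which is exactly the statement that \(y\notin Z_\lambda^{\an}\). Both points follow from compatibility of analytification with fiber products and field extension, as in Proposition~\ref{prop:analytic_fibers}.
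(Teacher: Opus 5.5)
Your proposal is correct and follows the paper's proof essentially step for step: you use the same linear conditions \(\lambda(\ell,w)=0\) cutting hyperplanes in \(\Pp(V_E/L_z)\), the same radical argument for their properness, and the same analytic point over the generic point from Lemma~\ref{lem:generic-analytic-point}. Your remark that the radical of \(\lambda\ne0\) is a proper \(K\)-subspace, hence lies in a \(K\)-rational hyperplane, slightly streamlines the paper's case split on its dimension (\(0\) or \(2\), which uses \(\operatorname{char}K\ne2\)); one harmless slip is ``countably many'': there are uncountably many \(K\)-rational hyperplanes because \(K\) is uncountable, but a point over the generic point avoids every proper closed subset, however many there are.
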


\begin{proof}
Let \(z\in\Omega_{\Tcal}\), put \(E=\mathscr H(z)\), and let
\(L_z\subset V_E\) be the corresponding one-dimensional
\(E\)-subspace.  The fiber of \(p\) over \(z\) is
\[
\Pp(V_E/L_z)\simeq\Pp^2_E,
\]
whose points correspond to two-planes \(W\subset V_E\) containing
\(L_z\).

Let \(A\subset\Lambda^2V\) be a \(K\)-rational hyperplane, defined by
a nonzero element
\[
\lambda\in(\Lambda^2V)^*.
\]
Regard \(\lambda\) as an alternating bilinear form on \(V\).  The
condition that the Pl\"ucker point of \(W=\langle L_z,v\rangle\) lie
in \(\Pp(A)\) is
\[
\lambda(\ell,v)=0,
\]
where \(0\ne\ell\in L_z\).  This is a linear condition on
\(V_E/L_z\).

This linear functional is nonzero. If it vanished identically, then
\[
\lambda(\ell,v)=0
\qquad\text{for every }v\in V_E,
\]
so \(L_z\) would be contained in the scalar extension of the radical
\[
\operatorname{rad}(\lambda)
=
\{u\in V:\lambda(u,V)=0\}.
\]
Because \(\lambda\ne0\) is alternating on a four-dimensional space and
\(\operatorname{char}K\ne2\), its radical has dimension \(0\) or \(2\).
If it has dimension \(2\), it is contained in a \(K\)-rational
hyperplane of \(V\); this would contradict
\(z\in\Omega_{\mathrm{Dr}}(V)\). If the radical has dimension zero,
the inclusion would force \(L_z=0\), again a contradiction.

Hence every \(K\)-rational hyperplane of
\(\Pp(\Lambda^2V)\) cuts the fiber \(\Pp(V_E/L_z)\) in a proper
closed algebraic subset.  Choose, by
Lemma~\ref{lem:generic-analytic-point}, an analytic point of this
fiber lying over its generic point.  Its associated two-plane has
Pl\"ucker point in
\(\Omega_{\mathrm{Dr}}(\Lambda^2V)\), hence in
\(\Omega_{\Mcal}\).  This gives a point of
\(\Omega_{\Fcal}\) over \(z\).
\end{proof}

\begin{lemma}\label{lem:exterior-square}
The exterior-square representation induces a closed immersion
\[
\iota:\PGL(V)\longrightarrow\PGL(\Lambda^2V).
\]
In particular, if \(\Gamma\subset\PGL(V)(K)\) is discrete, then
\(\iota(\Gamma)\) is discrete.
\end{lemma}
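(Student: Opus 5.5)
The plan is to first build \(\iota\) as a morphism of algebraic groups, then show it is a closed immersion, and finally deduce discreteness on \(K\)-points.

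\textbf{Construction.} The representation \(\Lambda^2:\GL(V)\to\GL(\Lambda^2V)\) sends the scalar \(t\) to \(t^2\). Hence it maps the center \(\mathbf G_m\) into the center of \(\GL(\Lambda^2V)\). It therefore descends to a homomorphism of algebraic groups \(\iota:\PGL(V)\to\PGL(\Lambda^2V)\).

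\textbf{Closed immersion.} A homomorphism of affine algebraic groups over a field has closed image, so it suffices to show that the scheme-theoretic kernel of \(\iota\) is trivial. Then \(\iota\) is a monomorphism of finite type group schemes, hence a closed immersion (SGA3, or Milne's \emph{Algebraic Groups}). To compute the kernel, work over an algebraically closed field, or functorially on \(R\)-points. Suppose \(g\in\GL(V)(R)\) satisfies \(\Lambda^2 g=c\cdot\mathrm{id}\). Then \(g\) preserves every decomposable line \(\Lambda^2W\), hence every two-plane \(W=\ker(v\mapsto v\wedge\alpha)\) for \(\alpha\) decomposable. A line \(L\) is the intersection of two planes containing it, so \(g\) preserves every line and is therefore scalar.

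The main obstacle is making this kernel computation scheme-theoretic rather than only on geometric points, since \(\operatorname{char}K\) may be positive. I would do it on the Lie algebra as well. If \(X\in\mathfrak{gl}(V)\) satisfies \(Xe_i\wedge e_j+e_i\wedge Xe_j=c\,e_i\wedge e_j\) for all \(i<j\), then comparing coefficients gives \(X_{ki}=0\) for \(k\ne i\). The remaining conditions \(X_{ii}+X_{jj}=c\) for all pairs \(i<j\) force \(X_{ii}=X_{jj}\), using three indices and the fact that \(\operatorname{char}\ne2\). Thus the kernel is smooth and trivial. A monomorphism with trivial kernel, including trivial infinitesimal kernel, is a closed immersion.

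\textbf{Discreteness.} A closed immersion of \(K\)-varieties induces a closed topological embedding \(\PGL(V)(K)\hookrightarrow\PGL(\Lambda^2V)(K)\) for the analytic topologies of the local field. This is because the analytic topology is functorial and closed immersions go to closed embeddings (locally they are zero loci of polynomials inside affine space, with the subspace topology). A discrete subgroup \(\Gamma\) of \(\PGL(V)(K)\) is therefore mapped homeomorphically onto \(\iota(\Gamma)\), which lies in a closed subset carrying the subspace topology. Hence \(\iota(\Gamma)\) is discrete in \(\PGL(\Lambda^2V)(K)\), and \(\iota\) is injective on \(\Gamma\).
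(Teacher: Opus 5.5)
Your proposal is correct and follows essentially the same route as the paper. You show the kernel is trivial on geometric points because \(g\) preserves every plane and hence every line, and the differential is injective by comparing coefficients of \(e_k\wedge e_j\); closed image plus trivial scheme-theoretic kernel then gives a closed immersion, hence a topological embedding on \(K\)-points that preserves discreteness. The only cosmetic difference is in the Lie algebra step: you keep the scalar \(c\) and equate the diagonal entries through three indices, which in fact needs no hypothesis on the characteristic, whereas the paper first subtracts the scalar \(c/2\) and then uses \(2\ne0\).
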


\begin{proof}
Suppose that \(g\in\GL(V)\) acts on \(\Lambda^2V\) as the scalar
\(c\).  For every nonzero \(v\in V\) and every \(w\notin kv\),
\[
gv\wedge gw=c\,v\wedge w.
\]
Hence \(g\) preserves every two-plane containing \(v\).  The
intersection of all such two-planes is the line \(kv\), so \(g\)
preserves every line in \(V\).  A linear transformation preserving
every line is scalar.  Thus the induced homomorphism
\[
\PGL(V)\longrightarrow\PGL(\Lambda^2V)
\]
has trivial kernel.

Its differential is also injective when \(\operatorname{char}K\ne2\):
if \(X\in\mathfrak{gl}(V)\) acts trivially on \(\Lambda^2V\)
modulo scalars, then after subtracting a scalar matrix one has
\[
Xv\wedge w+v\wedge Xw=0
\qquad(v,w\in V),
\]
which forces \(X=0\): in a basis, comparison of the coefficients
of \(e_k\wedge e_j\), with \(i,j,k\) distinct, first makes every
off-diagonal entry of \(X\) zero. The remaining equations are
\(x_i+x_j=0\) for all \(i\ne j\); since \(2\ne0\), all \(x_i\)
vanish. The same kernel argument works over an algebraic closure.  The image of a
homomorphism of algebraic groups is closed; trivial scheme-theoretic
kernel together with injective differential therefore makes this
homomorphism a closed immersion.  Consequently the induced map on
\(K\)-points is a topological embedding, and discreteness is preserved.
\end{proof}

\begin{proof}[Proof of Theorem~\ref{thm:D}]

Drinfeld upper half-spaces are open analytic subsets. Their inverse
images under the Pl\"ucker embedding and the incidence projections are
therefore open in the indicated smooth varieties. Lemma~\ref{lem:OmegaM-openorbit}
gives nonemptiness of \(\Omega_{\Mcal}\), and
Lemmas~\ref{lem:q-surj} and~\ref{lem:p-surj} give the surjectivity of
both maps. Nonemptiness of \(\Omega_{\Tcal}\) also follows from
Lemma~\ref{lem:generic-analytic-point} applied to projective space.
All three domains have the dimensions of their ambient varieties.
Since the algebraic projections are smooth projective bundles,
restriction to these open analytic subsets gives smooth maps of relative
dimensions two and one.

Every element of \(\PGL(V)(K)\) permutes the \(K\)-rational
hyperplanes of \(V\), and its exterior square permutes those of
\(\Lambda^2V\). Thus the domains are invariant. Proposition~\ref{prop:drinfeld_action}
gives a free proper action on \(\Omega_{\Tcal}\).
Lemma~\ref{lem:exterior-square} makes the image of \(\Gamma\) in
\(\PGL(\Lambda^2V)(K)\) torsion-free and discrete. The same
proposition gives a free proper action on
\(\Omega_{\mathrm{Dr}}(\Lambda^2V)\). Restriction to the invariant
analytic subspace \(\Omega_{\Mcal}\) preserves these properties.

On \(\Omega_{\Fcal}\), freeness follows from equivariance of \(p\).
If \(C\) is compact and \(\gamma C\cap C\ne\varnothing\), then
\(\gamma p(C)\cap p(C)\ne\varnothing\). Properness on
\(\Omega_{\Tcal}\) permits only finitely many such \(\gamma\).
Thus all three actions are free and proper. They admit open
neighborhoods disjoint from their nonidentity translates, and gluing
these neighborhoods gives the analytic quotients, with quotient maps
locally analytic isomorphisms.

Equivariance descends \(p\) and \(q\) to \(\bar p\) and \(\bar q\).
Their surjectivity follows from that of \(p\) and \(q\). Near any
flag, choose neighborhoods on which the source and target quotient maps
are isomorphisms, and shrink the source so its image lies in the chosen
target neighborhood. The descended map is then identified locally with
the restricted incidence map. It is smooth of the same relative
dimension. Finally, Lemma~\ref{lem:OmegaM-openorbit} and
Proposition~\ref{prop:orbits} give the homogeneous-space assertion of
Corollary~\ref{thm:C}.
\end{proof}

\begin{remark}
The fibers of the restricted incidence maps are the complements
in \(\Pp^2\) and \(\Pp^1\) of the hyperplane sections imposed by the
two Drinfeld domains.
\end{remark}

\section{An explicit flag and the restricted fibers}\label{sec:example}

Let \(K'/K\) be the unramified extension of degree seven, and choose
\(t\in K'\) whose residue generates the degree-seven residue field
extension. Then \(1,t,\ldots,t^6\) are linearly independent over \(K\).
In the basis of Section~\ref{sec:orbits}, put
\[
u=(1,0,t,t^2),\qquad v=(0,1,t^3,t^5),\qquad
W=K'u+K'v,\qquad L=K'(u+tv).
\]
\begin{proposition}
The flag \(L\subset W\) defines a \(K'\)-point of
\(\Omega_{\Fcal}\). Its images are \(K'\)-rational points of
\(\Omega_{\Tcal}\) and \(\Omega_{\Mcal}\).
\end{proposition}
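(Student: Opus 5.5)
The plan is to reduce everything to the linear-independence criterion for \(\Omega_{\mathrm{Dr}}\) stated in the introduction. A point \([a_1:\cdots:a_d]\in\Pp(E)(K')\), written in a \(K\)-basis of \(E\), lies in \(\Omega_{\mathrm{Dr}}(E)\) exactly when \(a_1,\ldots,a_d\) are \(K\)-linearly independent. Since \(\Omega_{\Fcal}\) is defined as \((p^{\an})^{-1}(\Omega_{\Tcal})\cap(q^{\an})^{-1}(\Omega_{\Mcal})\), it suffices to do three things:
\begin{enumerate}[label=\textup{(\roman*)}]
\item check that \(L\subset W\) is a genuine flag over \(K'\), so that it gives a \(K'\)-point of \(\Fcal\);
\item show that the \(K'\)-point \([L]\in\Pp(V)(K')\) lies in \(\Omega_{\Tcal}\);
\item show that the Plücker point \([\Lambda^2W]\in\Pp(\Lambda^2V)(K')\) lies in \(\Omega_{\mathrm{Dr}}(\Lambda^2V)\), and hence in \(\Omega_{\Mcal}\), because it automatically lies on \(\Mcal\).
\end{enumerate}
The images are then \(K'\)-rational points by construction.

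For (i), the vectors \(u\) and \(v\) are independent over \(K'\) because their first two coordinates are \((1,0)\) and \((0,1)\). Hence \(W\) is two-dimensional and \(u+tv\ne0\) lies in \(W\).

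For (ii), compute
\[
u+tv=(1,\,t,\,t+t^4,\,t^2+t^6).
\]
The four coordinates are \(K\)-linear combinations of \(1,t,t^2,t^4,t^6\). The coefficient matrix is triangular with respect to the distinct leading monomials \(1,t,t^4,t^6\). Since \(1,t,\ldots,t^6\) are \(K\)-independent (the residue of \(t\) has degree seven), the four coordinates are independent over \(K\).

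For (iii), compute the Plücker coordinates \(p_{ij}=u_iv_j-u_jv_i\) in the basis \(e_i\wedge e_j\):
\[
p_{12}=1,\quad p_{13}=t^3,\quad p_{14}=t^5,\quad p_{23}=-t,\quad p_{24}=-t^2,\quad p_{34}=t^6-t^5.
\]
These six elements are again combinations of \(1,t,\ldots,t^6\), with distinct leading monomials \(1,t^3,t^5,t,t^2,t^6\). The coefficient matrix is therefore triangular after reordering, and the six coordinates are \(K\)-independent. The criterion then gives \([\Lambda^2W]\in\Omega_{\mathrm{Dr}}(\Lambda^2V)\cap\Mcal^{\an}=\Omega_{\Mcal}\).

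The only real obstacle is to justify the criterion for \(K'\)-points in the Berkovich setting. A \(K'\)-point gives a rigid point of \(\Pp(E)^{\an}\). It lies on \(\Pp(H)^{\an}\) for a \(K\)-rational hyperplane \(H=\{\sum\lambda_ia_i=0\}\) with \(\lambda_i\in K\) not all zero exactly when \(\sum\lambda_ia_i=0\) in \(K'\). So it avoids all such \(\Pp(H)^{\an}\) iff the coordinates are \(K\)-independent. Beyond this, the rest is the bookkeeping of the two triangular systems above, and I would double-check the sign and degree of \(p_{34}=t\cdot t^5-t^2\cdot t^3\).
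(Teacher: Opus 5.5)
Your proposal is correct and follows the paper's proof. You compute the same Plücker coordinates \((1,t^3,t^5,-t,-t^2,t^6-t^5)\) and the same line coordinates \([1:t:t+t^4:t^2+t^6]\), and you deduce \(K\)-linear independence from that of \(1,t,\ldots,t^6\); your distinct-leading-monomial argument is a tidy form of the paper's coefficient comparison, and your extra checks (that \(L\subset W\) is a genuine flag, and that the introductory criterion applies to \(K'\)-points) are harmless details the paper leaves implicit.
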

\begin{proof}
The Pl\"ucker coordinates of \(W\), in the order
\(12,13,14,23,24,34\), are
\[
(1,t^3,t^5,-t,-t^2,t^6-t^5).
\]
They are linearly independent over \(K\): a relation between them is
a polynomial relation in \(t\) of degree at most six; comparing its
coefficients gives zero for each coefficient of the original relation.
Thus \(W\in\Omega_{\Mcal}(K')\). The line has coordinates
\[
[1:t:t+t^4:t^2+t^6].
\]
These four elements are likewise linearly independent over \(K\),
by comparison of coefficients of \(1,t,t^4,t^2,t^6\).
Hence \(L\in\Omega_{\Tcal}(K')\), proving the assertion.
For the chosen symplectic form, \(\omega(u,v)=t^5-t\ne0\), in
agreement with the open-orbit condition.
\end{proof}

If \(F/K\) is finite and
\(\Omega_{\Tcal}(F)\ne\varnothing\), then \([F:K]\ge4\).
If \(\Omega_{\Mcal}(F)\ne\varnothing\), then \([F:K]\ge6\).
These inequalities follow from the required linear independence of four
and six coordinates. In particular, the domains have no \(K\)-rational
points. The flag constructed above is defined over the degree-seven
extension \(K'/K\).

For \(x\in\Omega_{\Mcal}\), write \(F=\mathscr H(x)\) and let
\(W_x\subset V_F\) be the associated plane. The analytic fiber is
\[
q^{-1}(x)=\Pp(W_x)^{\an}\setminus
\bigcup_{\substack{H\subset V\\\dim H=3}}
\bigl(\Pp(W_x)\cap\Pp(H_F)\bigr)^{\an}.
\]
Each removed intersection is an \(F\)-rational point of this projective
line, and its properness was proved in Lemma~\ref{lem:q-surj}.
For \(z\in\Omega_{\Tcal}\), write \(F=\mathscr H(z)\) and
\(L_z=F\ell\). Then
\[
p^{-1}(z)=\Pp(V_F/L_z)^{\an}\setminus
\bigcup_{0\ne\lambda\in(\Lambda^2V)^*}
\{[w]:\lambda(\ell,w)=0\}^{\an}.
\]
Each removed set is an \(F\)-rational projective line, by
Lemma~\ref{lem:p-surj}. These are hyperplanes over \(F\) induced
by hyperplanes of \(V\) and alternating forms on \(V\) defined over
\(K\), respectively. Their coefficients in fiber coordinates can lie
in \(F\).

\section{Comparison with torus-stable domains}\label{sec:stable}

Let \(T\subset\operatorname{SL}(V)\) be a maximal \(K\)-split torus,
that is, a maximal algebraic subgroup isomorphic over \(K\) to a product
of multiplicative groups. A \(K\)-basis \(f_1,\ldots,f_4\) diagonalizes
\(T\). Its character space is
\(\mathbf R^4/\mathbf R(1,1,1,1)\), and the weights on
\(\Lambda^2V\) are
\[
\chi_{ij}=\epsilon_i+\epsilon_j\pmod{\mathbf R(1,1,1,1)},\qquad i<j,
\]
where \(\epsilon_i\) are the coordinate vectors. For a point
\(x\in\Gr(2,V)^{\an}\), evaluate its Pl\"ucker coordinates in
\(\mathscr H(x)\). The numerical criterion for stability with respect
to the Pl\"ucker line bundle says that \(x\) is \(T\)-stable precisely
when zero lies in the interior of the convex hull of those \(\chi_{ij}\)
whose coordinates are nonzero; see \cite[\S1.6]{Voskuil}.
Let \(Y^s\) be the set of points that satisfy this condition for every
maximal \(K\)-split torus of \(\operatorname{SL}(V)\). This is the Berkovich formulation of the stable locus in
Voskuil's construction for this group and line bundle.

\begin{proposition}\label{prop:stable-comparison}
As subsets of \(\Gr(2,V)^{\an}\),
\[
\Omega_{\Mcal}\subsetneq Y^s.
\]
For every nondegenerate alternating form \(\omega\), the intersection
\(Y^s\cap\LG(2,V)^{\an}\) is nonempty.
\end{proposition}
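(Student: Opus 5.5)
We must establish three things: \(\Omega_{\Mcal}\subset Y^s\), that the inclusion is strict, and that \(Y^s\) meets \(\LG(2,V)^{\an}\). The strictness will in fact follow from the third claim.

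\emph{Inclusion.} Let \(x\in\Omega_{\Mcal}\) and fix a maximal \(K\)-split torus \(T\) with diagonalizing \(K\)-basis \(f_1,\ldots,f_4\). Each Plücker coordinate \(p_{ij}\) in the basis \(f_i\wedge f_j\) is a nonzero \(K\)-linear form on \(\Lambda^2V\). Because \(x\) avoids every \(K\)-rational hyperplane of \(\Pp(\Lambda^2V)\), all six coordinates of \(x\) are nonzero in \(\mathscr H(x)\). So we must check that \(0\) lies in the interior of the convex hull of all six weights \(\epsilon_i+\epsilon_j\) modulo \(\mathbf R(1,1,1,1)\). This hull is an octahedron, the hypersimplex \(\Delta(2,4)\), centered at the image of \((\tfrac12,\tfrac12,\tfrac12,\tfrac12)\), which is \(0\). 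It is full-dimensional in the three-dimensional quotient, since the differences \(\chi_{ij}-\chi_{kl}\) span it. Explicitly, \(0=\tfrac12(\chi_{12}+\chi_{34})=\tfrac12(\chi_{13}+\chi_{24})=\tfrac12(\chi_{14}+\chi_{23})\), and these three diagonals span, so \(0\) is interior. The same holds for every \(T\), hence \(x\in Y^s\).

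\emph{Stable Lagrangian points.} We need \(y\in\LG(2,V)^{\an}\cap Y^s\). The plan is to use Lemma~\ref{lem:generic-analytic-point} on the integral variety \(\LG(2,V)\), a smooth quadric threefold by Proposition~\ref{prop:orbits}, to get a point \(y\) lying over its generic point. Then \(y\) avoids every proper closed subset of \(\LG(2,V)\). For a given \(T\) with basis \(f_i\), the vanishing locus of \(p_{ij}\) restricted to \(\LG\) is proper unless \(\LG\subset\{p_{ij}=0\}\). Since \(\LG\) spans the hyperplane \(\Pp(\ker c_\omega)\), that would force \(p_{ij}\) to be proportional to \(c_\omega\). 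This is impossible, because \(c_\omega\) is a nondegenerate alternating form (rank 4), while \(p_{ij}\), viewed as an alternating form, is \(f_i^*\wedge f_j^*\), of rank 2. So all six coordinates of \(y\) are nonzero for every \(T\), and the octahedron argument above gives \(y\in Y^s\). Since there are uncountably many tori, it is essential that the generic point avoids all proper closed subsets simultaneously, which is exactly what Lemma~\ref{lem:generic-analytic-point} provides.

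\emph{Strictness.} By Lemma~\ref{lem:OmegaM-openorbit}, \(\Omega_{\Mcal}\cap\LG(2,V)^{\an}=\varnothing\), so \(y\in Y^s\setminus\Omega_{\Mcal}\).

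The main point needing care is the stability criterion. We should confirm that the interior is taken in the quotient \(\mathbf R^4/\mathbf R(1,1,1,1)\) and that when all weights occur the criterion holds, which the diagonal computation settles. The remaining ingredients are the rank argument and the generic-point lemma.
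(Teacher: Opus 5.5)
Your proposal is correct and follows essentially the same route as the paper. For the inclusion you use nonvanishing of all six Pl\"ucker coordinates together with the opposite-pair weight computation. For the Lagrangian stable point you take a generic analytic point of \(\LG(2,V)\) via Lemma~\ref{lem:generic-analytic-point}, with the rank-two versus rank-four comparison of \(f_i^*\wedge f_j^*\) and \(c_\omega\). Your only deviation is citing Lemma~\ref{lem:OmegaM-openorbit} for \(y\notin\Omega_{\Mcal}\), whereas the paper observes \(c_\omega(x)=0\) directly; these are the same fact.
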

\begin{proof}
Every Pl\"ucker coordinate in any \(K\)-basis is a nonzero
\(K\)-linear functional on \(\Lambda^2V\). At a point of
\(\Omega_{\Mcal}\) all six coordinates are therefore nonzero.
The weights occur in opposite pairs
\(\chi_{12}=-\chi_{34}\), \(\chi_{13}=-\chi_{24}\), and
\(\chi_{14}=-\chi_{23}\); the first member of each pair forms a
basis of the three-dimensional character space. Their convex hull has
zero in its interior. This proves \(\Omega_{\Mcal}\subset Y^s\).

The integral quadric \(D=\LG(2,V)\) spans the hyperplane
\(\Pp(\ker c_\omega)\), by its nondegenerate quadratic equation in
Section~\ref{sec:orbits}. A Pl\"ucker coordinate functional in any
\(K\)-basis is a decomposable alternating form \(f_i^*\wedge f_j^*\)
of rank two. It cannot be proportional to \(c_\omega\), which
corresponds to the rank-four form \(\omega\). Its restriction to
\(D\) is consequently nonzero.
Choose an analytic point \(x\in D^{\an}\) lying over the generic
point of \(D\), using Lemma~\ref{lem:generic-analytic-point}.
All six Pl\"ucker coordinates in every \(K\)-basis are nonzero at
\(x\). The same weight calculation makes \(x\) stable for every
maximal \(K\)-split torus. But \(c_\omega(x)=0\), so
\(x\notin\Omega_{\Mcal}\). This proves both assertions.
\end{proof}

\section{Noncompactness of the quotients}\label{sec:compactness}

Let \(G=\PGL(V)(K)\) act on \(\Pp(\Lambda^2V^*)(K)\) by
\(g\cdot[\beta]=[(g^{-1})^*\beta]\). The stabilizer of \([\omega]\) is \(H_\omega\), the projective
symplectic similitude group.

\begin{lemma}\label{lem:compact-cocompact}
Let \(E\) be a finite-dimensional \(K\)-vector space of dimension at least two, and let
\(\Gamma\subset\PGL(E)(K)\) be torsion-free and discrete.  If
\(\Gamma\backslash\Omega_{\mathrm{Dr}}(E)\) is compact, then
\(\Gamma\backslash\PGL(E)(K)\) is compact.
\end{lemma}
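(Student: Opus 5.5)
We prove the lemma by passing from \(\Omega_{\mathrm{Dr}}(E)\) to the Bruhat--Tits building and then from the building to the group. Write \(G=\PGL(E)(K)\) and \(\mathscr B=\mathscr B(\PGL(E),K)\).

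\emph{Step 1: from the Drinfeld quotient to the building quotient.} By Proposition~\ref{prop:drinfeld_action}(c), the quotient \(\Gamma\backslash\Omega_{\mathrm{Dr}}(E)\) strongly deformation retracts onto a closed subspace canonically homeomorphic to \(\Gamma\backslash\mathscr B\). A closed subspace of a compact space is compact, so \(\Gamma\backslash\mathscr B\) is compact. Alternatively, one can use the continuous surjective equivariant retraction \(\tau:\Omega_{\mathrm{Dr}}(E)\to\mathscr B\). It induces a continuous surjection \(\Gamma\backslash\Omega_{\mathrm{Dr}}(E)\to\Gamma\backslash\mathscr B\), and compactness passes to the image. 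For this route we need surjectivity of \(\tau\), which holds because the skeleton of the Drinfeld--Mustafin model realizes the whole building.

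\emph{Step 2: a compact fundamental set in the building.} The building is locally finite, so \(\Gamma\backslash\mathscr B\) is a compact quotient of a locally compact space by a properly acting group. Each point of \(\mathscr B\) has a compact neighborhood, and finitely many of their images cover the quotient. Hence there is a compact set \(C\subset\mathscr B\) with \(\Gamma C=\mathscr B\).

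\emph{Step 3: from the building to the group.} Fix a vertex \(x_0\in\mathscr B\), and let \(P=\mathrm{Stab}_G(x_0)\), which is compact open. The orbit map \(g\mapsto gx_0\) is continuous and proper. The preimage \(D=\{g\in G: gx_0\in C\}\) is therefore compact: it is closed, and it is covered by finitely many cosets \(gP\) because \(C\) meets only finitely many vertices and cells. Now take any \(g\in G\). By Step 2 there is \(\gamma\in\Gamma\) with \(\gamma^{-1}gx_0\in C\), so \(\gamma^{-1}g\in D\). Hence \(G=\Gamma D\), and \(\Gamma\backslash G\), being the image of the compact set \(D\), is compact.

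\emph{Main obstacle.} The only delicate point is justifying that the retraction target in Step 1 is the \emph{full} quotient \(\Gamma\backslash\mathscr B\), rather than a proper subcomplex. We take this directly from Proposition~\ref{prop:drinfeld_action}(c), which is stated earlier. Properness of the orbit map in Step 3 is standard for locally finite buildings with compact open vertex stabilizers.
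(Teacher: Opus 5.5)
Your proposal is correct and follows the paper's proof essentially step for step. You obtain compactness of \(\Gamma\backslash\mathscr B\) from the equivariant retraction; the ``alternative'' in your Step~1 is exactly the paper's argument, and your primary route via the closed subspace of Proposition~\ref{prop:drinfeld_action}(c) works equally well. You then take a compact \(C\) with \(\Gamma C=\mathscr B\) from local compactness, and conclude \(G=\Gamma D\) with \(D=\{g:gx_0\in C\}\) compact; your finitely-many-cosets-of-the-vertex-stabilizer justification for \(D\) just spells out the paper's appeal to properness of the action.
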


\begin{proof}
Write \(G_E=\PGL(E)(K)\) and \(\mathscr B_E=\mathscr B(\PGL(E),K)\).
The proof of Proposition~\ref{prop:drinfeld_action}(c) gives a
\(G_E\)-equivariant retraction of \(\Omega_{\mathrm{Dr}}(E)\) onto a
closed subspace \(S\), and the canonical homeomorphism
\(S\simeq\mathscr B_E\) is \(G_E\)-equivariant.  The induced map
\(\Gamma\backslash\Omega_{\mathrm{Dr}}(E)\to\Gamma\backslash\mathscr B_E\)
is continuous and surjective, so \(\Gamma\backslash\mathscr B_E\) is
compact.

Since \(\mathscr B_E\) is locally compact, finitely many relatively
compact open subsets of \(\mathscr B_E\) have images covering
\(\Gamma\backslash\mathscr B_E\).  Hence there is a compact
\(F\subset\mathscr B_E\) with \(\Gamma F=\mathscr B_E\).  Fix a vertex
\(x_0\).  Since \(G_E\) acts properly on \(\mathscr B_E\), the set
\(C=\{g\in G_E: gx_0\in F\}\) is compact.  For \(g\in G_E\) write
\(gx_0=\gamma y\) with \(\gamma\in\Gamma\) and \(y\in F\); then
\(\gamma^{-1}g\in C\).  Thus \(G_E=\Gamma C\), and
\(\Gamma\backslash G_E\) is compact.
\end{proof}

\begin{lemma}\label{lem:G-mod-H}
For every nondegenerate alternating form \(\omega\) on \(V\), the space
\(G/H_\omega\) is not compact.  Hence no subgroup of \(H_\omega\) is cocompact in \(G\).
\end{lemma}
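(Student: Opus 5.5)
The plan is to identify \(G/H_\omega\) with the orbit of \([\omega]\) in \(\Pp(\Lambda^2V^*)(K)\) and show that this orbit is not compact. By definition \(H_\omega\) is the stabilizer of \([\omega]\), so the orbit map \(gH_\omega\mapsto g\cdot[\omega]\) is a continuous bijection from \(G/H_\omega\) onto the set \(\mathcal O\) of classes of nondegenerate alternating forms. Every nondegenerate alternating form on \(V\) is equivalent over \(K\) to \(\omega\) (symplectic bases exist), so \(\mathcal O\) is exactly the set of nondegenerate classes.

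Rather than proving that the orbit map is a homeomorphism, I would argue directly with sequences. Suppose \(G/H_\omega\) were compact. Then \(\mathcal O\), as the continuous image of a compact set, would be compact and hence closed in the Hausdorff space \(\Pp(\Lambda^2V^*)(K)\).

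To reach a contradiction, exhibit a sequence in \(\mathcal O\) converging to a degenerate class. In the symplectic basis, take
\[
\omega_n=e_1^*\wedge e_4^*+\pi^n\,e_2^*\wedge e_3^*.
\]
Each \(\omega_n\) is nondegenerate, since its Pfaffian \(\pi^n\) is nonzero. The classes \([\omega_n]\) converge to \([e_1^*\wedge e_4^*]\), which is a rank-two, hence degenerate, class. This contradicts closedness of \(\mathcal O\). Equivalently, one can realize \(\omega_n=(g_n^{-1})^*\omega\) with \(g_n=\operatorname{diag}(1,\pi^{-n},1,1)\).

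The consequence follows. If \(\Gamma\subset H_\omega\) were cocompact in \(G\), then \(G=\Gamma C\) for some compact \(C\) (locally compact group, cocompact subgroup). Then \(G/H_\omega=\Gamma C H_\omega/H_\omega=C H_\omega/H_\omega\), because \(\Gamma\subset H_\omega\) acts trivially on the base coset. This image of \(C\) is compact, contradicting the first part.

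The only delicate step is justifying \(G=\Gamma C\) from compactness of \(\Gamma\backslash G\). For this, cover \(\Gamma\backslash G\) by images of finitely many relatively compact open sets, as in the proof of Lemma~\ref{lem:compact-cocompact}.
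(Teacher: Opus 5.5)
Your first part is the paper's argument. Compactness of \(G/H_\omega\) would make the orbit \(G\cdot[\omega]\) compact, hence closed in \(\Pp(\Lambda^2V^*)(K)\). The same sequence \([\omega_n]=[e_1^*\wedge e_4^*+\pi^n e_2^*\wedge e_3^*]\) of nondegenerate classes converges to the rank-two class \([e_1^*\wedge e_4^*]\), which is a contradiction. Your explicit \(g_n=\operatorname{diag}(1,\pi^{-n},1,1)\) is also correct.

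The deduction of the second sentence has an error as written. From \(G=\Gamma C\) every coset has the form \(gH_\omega=\gamma c\,H_\omega\). The factor \(\gamma\in H_\omega\) cannot be absorbed into \(H_\omega\), because \(c\) sits between them. That \(\gamma\) fixes the base coset \(H_\omega\) says nothing about \(cH_\omega\): one has \(\gamma cH_\omega=cH_\omega\) only when \(c^{-1}\gamma c\in H_\omega\). So the equality \(\Gamma CH_\omega/H_\omega=CH_\omega/H_\omega\) does not follow.

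The repair is to invert. Since \(G=G^{-1}=C^{-1}\Gamma\), every coset is \(gH_\omega=c^{-1}\gamma H_\omega=c^{-1}H_\omega\), so \(G/H_\omega\) is the image of the compact set \(C^{-1}\).

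The paper builds this inversion into a map and so never needs \(G=\Gamma C\). The map \(\Gamma\backslash G\to G/H_\omega\), \(\Gamma g\mapsto g^{-1}H_\omega\), is well defined because \((\gamma g)^{-1}H_\omega=g^{-1}\gamma^{-1}H_\omega=g^{-1}H_\omega\). It is continuous and surjective. Compactness of \(\Gamma\backslash G\) therefore transfers directly to \(G/H_\omega\), and the step you flagged as delicate is unnecessary.
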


\begin{proof}
The continuous orbit map
\(G/H_\omega\to\Pp(\Lambda^2V^*)(K)\) has image
\(G\cdot[\omega]\). If \(G/H_\omega\) were compact, this image
would be compact and therefore closed in the Hausdorff projective space.
In the dual symplectic basis, put
\[
\omega_n=e_1^*\wedge e_4^*+\pi^n e_2^*\wedge e_3^*,\qquad n\ge1.
\]
Each \(\omega_n\) is nondegenerate, hence lies in the
\(\GL(V)(K)\)-orbit of \(\omega\). But \([\omega_n]\) converges to
\([e_1^*\wedge e_4^*]\), which has rank two and lies outside that orbit.
Thus \(G/H_\omega\) is noncompact.
If \(\Gamma\subset H_\omega\), the map
\[
\Gamma\backslash G\longrightarrow G/H_\omega,\qquad
\Gamma g\longmapsto g^{-1}H_\omega
\]
is a well-defined continuous surjection. The noncompactness of its
target excludes compactness of \(\Gamma\backslash G\).
\end{proof}

\begin{lemma}\label{lem:fibers-noncompact}
Let \(z\in\Omega_{\Tcal}\) and \(x\in\Omega_{\Mcal}\), and let \(F\)
be \(\mathscr H(z)\), respectively \(\mathscr H(x)\).  The fibers of
\(p:\Omega_{\Fcal}\to\Omega_{\Tcal}\) over \(z\) and of
\(q:\Omega_{\Fcal}\to\Omega_{\Mcal}\) over \(x\) are nonempty open
subsets of \(\Pp(V_F/L_z)^{\an}\), respectively \(\Pp(W_x)^{\an}\),
different from the whole space.  They are not compact.
\end{lemma}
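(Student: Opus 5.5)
The fibers will be identified explicitly as complements of rational linear subspaces in a projective space over \(F\). Noncompactness will then follow from a topological argument: a nonempty proper open subset of a connected compact Hausdorff space cannot be closed.

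\emph{Identifying the fibers.} By Proposition~\ref{prop:analytic_fibers}, the full fiber of \(q^{\an}\) over \(x\) is \(\Pp(W_x)^{\an}\simeq(\Pp^1_F)^{\an}\), and the full fiber of \(p^{\an}\) over \(z\) is \(\Pp(V_F/L_z)^{\an}\simeq(\Pp^2_F)^{\an}\). Intersecting with \(\Omega_{\Fcal}\) gives the descriptions in Section~\ref{sec:example}:
\begin{itemize}
\item the \(q\)-fiber is \(\Pp(W_x)^{\an}\) minus the \(F\)-points \(\Pp(W_x)\cap\Pp(H_F)\), one for each \(K\)-rational hyperplane \(H\subset V\);
\item the \(p\)-fiber is \(\Pp(V_F/L_z)^{\an}\) minus the \(F\)-lines \(\{\lambda(\ell,\cdot)=0\}\), one for each \(K\)-rational hyperplane of \(\Lambda^2V\).
\end{itemize}
Each fiber is open, because it is the intersection of the full fiber with the open set \(\Omega_{\Fcal}\). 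Each is nonempty by Lemmas~\ref{lem:q-surj} and~\ref{lem:p-surj}. Each is proper: the removed sets are nonempty, since every \(K\)-hyperplane meets the projective line \(\Pp(W_x)\), and every linear condition \(\lambda(\ell,\cdot)=0\) cuts out a nonempty line in \(\Pp^2_F\). The properness of these intersections, proved in the two surjectivity lemmas, ensures the removed sets are genuinely points, respectively lines.

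\emph{Noncompactness.} The analytification of a geometrically connected projective \(F\)-variety is compact, Hausdorff and connected; projective space over any complete field has these properties. Suppose a fiber were compact. It would then be closed in the Hausdorff ambient space. Being also open, nonempty and proper, it would contradict connectedness. So neither fiber is compact.

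\emph{Main obstacle.} The point needing the most care is connectedness of \((\Pp^n_F)^{\an}\) for an arbitrary complete extension \(F\), which may not be discretely valued. I would cite Berkovich's theorem that the analytification of a connected scheme of finite type is connected \cite[\S3.4]{Berkovich1990}.

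An alternative avoids connectedness for \(\Pp^1\), where the fiber is the complement of infinitely many rigid points. There one could exhibit a sequence of type-1 points converging to a removed rational point. The same idea handles \(\Pp^2\), using a removed line. The connectedness argument is cleaner, so I would use it as the primary route.
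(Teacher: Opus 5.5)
Your proposal is correct and follows essentially the same route as the paper. Nonemptiness comes from the two surjectivity lemmas, and openness from that of \(\Omega_{\Fcal}\). Properness comes from exhibiting the removed rational point \(\Pp(W_x)\cap\Pp(H_F)\), respectively the removed line \(\{\lambda(\ell,\cdot)=0\}\). Noncompactness follows because a compact open subset of a connected Hausdorff space is empty or the whole space; your appeal to Berkovich's connectedness theorem over an arbitrary complete \(F\) only makes explicit a point the paper asserts without citation.
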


\begin{proof}
The fibers are nonempty by Lemmas~\ref{lem:p-surj} and~\ref{lem:q-surj},
and open because \(\Omega_{\Fcal}\) is open.  Every \(K\)-rational
hyperplane \(H\subset V\) has scalar extension \(H_F\) meeting
\(W_x\) in a line: the dimension inequality gives a nonzero
intersection, and Lemma~\ref{lem:q-surj} excludes \(W_x\subset H_F\); the point \(\Pp(W_x)\cap\Pp(H_F)\) is not in the
fiber of \(q\).  For \(0\ne\lambda\in(\Lambda^2V)^*\), the line
\(\{[w]:\lambda(\ell,w)=0\}\subset\Pp(V_F/L_z)\) is not in the fiber of
\(p\).  The spaces \(\Pp(V_F/L_z)^{\an}\) and \(\Pp(W_x)^{\an}\) are
connected and Hausdorff.  In a connected Hausdorff space a compact open
subset is also closed, hence empty or the whole space.
\end{proof}

\begin{proof}[Proof of Theorem~\ref{thm:N}]
(a) Proposition~\ref{prop:symmetry} and the local quotient charts
show that \(\mathcal H\) descends for every \(\Gamma\subset H_\omega\).
If \(Y_\Gamma\) were compact, Lemma~\ref{lem:compact-cocompact}
with \(E=V\) would make \(\Gamma\) cocompact in \(G\), contrary to
Lemma~\ref{lem:G-mod-H}.

(b) This is the second assertion of Lemma~\ref{lem:G-mod-H}; by
Proposition~\ref{prop:symmetry}, \(\mathcal H\) is
\(\Gamma\)-invariant exactly when \(\Gamma\subset H_\omega\).
The same stabilizer preserves its restriction to \(\Omega_{\Tcal}\).
Indeed, Lemma~\ref{lem:generic-analytic-point} provides a point of
\(\Omega_{\Tcal}\) over the generic point of \(\Pp(V)\).
Equality of the two restricted distributions implies equality after
extension of the function field to its completed residue field, hence
at the algebraic generic point. The corresponding sections of the
relative Grassmannian of two-planes in \(T_{\Pp(V)}\) then agree
everywhere, since that Grassmannian is separated and \(\Pp(V)\) is
integral.

(c) Let \(\bar z\) be the image of \(z\in\Omega_{\Tcal}\) in
\(\Gamma\backslash\Omega_{\Tcal}\).  This quotient is Hausdorff, so
\(\bar p^{-1}(\bar z)\) is closed.  Since the action is proper, the
orbit \(\Gamma z\) is closed and discrete, and \(p^{-1}(\Gamma z)\) is
the disjoint union of the open and closed subsets \(p^{-1}(\gamma z)\),
\(\gamma\in\Gamma\).  The quotient map
\(\Omega_{\Fcal}\to\Gamma\backslash\Omega_{\Fcal}\) is open, so its
restriction to the saturated set \(p^{-1}(\Gamma z)\) is an open map
onto \(\bar p^{-1}(\bar z)\).  Its restriction to \(p^{-1}(z)\) is
injective, because \(\gamma p^{-1}(z)\cap p^{-1}(z)\ne\varnothing\)
implies \(\gamma z=z\), hence \(\gamma=1\).  Therefore
\(\bar p^{-1}(\bar z)\) is homeomorphic to \(p^{-1}(z)\), which is not
compact by Lemma~\ref{lem:fibers-noncompact}.  The same argument
applies to \(\bar q\). A closed subset of a compact space is
compact, so the flag quotient is noncompact. Proper analytic morphisms
have compact fibers; hence both descended projections are nonproper
for every \(\Gamma\) under consideration.
\end{proof}

Suppose that \(K\) is a finite extension of \(\mathbf Q_p\).
Let \(\Gamma\subset\PGL(V)(K)\) be torsion-free, discrete and
cocompact. Classical uniformization identifies \(\Gamma\backslash\Omega_{\Tcal}\) with
the analytification of a smooth projective variety
\cite{Mustafin,Kurihara}. The descended incidence projections have noncompact
fibers by Theorem~\ref{thm:N}.

\begin{remark}\label{rem:complex-contrast}
Let \(h\) be a quaternionic Hermitian form of signature \((1,1)\)
on \(\mathbf H^2\), viewed as \(\mathbf C^4\), and let
\(\operatorname{Sp}(1,1)\) be its isometry group. This group preserves
a complex symplectic form on \(\mathbf C^4\). On
\(\mathbf{HP}^1=S^4\), it preserves the ball \(B\) consisting of
negative quaternionic lines. The ball is real hyperbolic four-space.
The twistor projection has the quaternionic description
\[
\tau:\mathbf{CP}^3\longrightarrow\mathbf{HP}^1,\qquad
v\mathbf C\longmapsto v\mathbf H;
\]
see \cite[\S2 and \S9]{EellsSalamon}. Each quaternionic linear
\(g\in\operatorname{Sp}(1,1)\) is complex linear and lifts its action
on \(\mathbf{HP}^1\) to
\[
\widetilde g(v\mathbf C)=(gv)\mathbf C,\qquad
\tau\circ\widetilde g=g\circ\tau.
\]
This formula gives a holomorphic group action on \(\mathbf{CP}^3\).
For a torsion-free cocompact discrete subgroup
\(\Gamma\subset\operatorname{Sp}(1,1)\), put
\(\Omega=\tau^{-1}(B)\); compare \cite{SeadeVerjovskyKleinian}.
Stabilizers on \(B\) are compact, so the discrete torsion-free group
acts freely and properly on \(B\), and hence on \(\Omega\).
The quotient is a smooth \(\mathbf{CP}^1\)-bundle over the compact
hyperbolic manifold \(\Gamma\backslash B\). It is therefore compact,
and its holomorphic contact distribution descends from
\(\mathbf{CP}^3\). 
\end{remark}

\begin{proposition}\label{prop:demailly}
Let \(K\) have characteristic zero, and let \(X\) be a smooth
projective threefold over \(K\) with ample canonical bundle.
Neither \(X\) nor \(X^{\an}\) admits a contact structure.
\end{proposition}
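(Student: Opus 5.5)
The idea is that a contact structure on a threefold forces the canonical bundle to be a square of the contact line bundle, and that this square then has a nonzero map to the cotangent bundle. Ampleness of \(K_X\) rules such a map out, by a vanishing theorem of Demailly type.

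\textbf{Step 1: GAGA.} Since \(X\) is projective, analytic vector bundles, analytic subbundles of \(T_{X^{\an}}\) and analytic sections on \(X^{\an}\) are algebraic by Berkovich's GAGA. Hence a contact distribution on \(X^{\an}\) is the analytification of an algebraic rank-two subbundle \(\mathcal H\subset T_X\). The contact condition \(\theta\wedge d\theta\ne0\) is local and compatible with analytification, so it holds algebraically. It therefore suffices to treat the algebraic case.

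\textbf{Step 2: the contact line bundle.} Put \(\mathcal L=T_X/\mathcal H\), a line bundle. Then \(\theta\colon T_X\to\mathcal L\) is an \(\mathcal L\)-valued one-form, i.e.\ a nowhere-vanishing section of \(\Omega^1_X\otimes\mathcal L\). The twisted three-form \(\theta\wedge d\theta\) is well defined as a section of \(\Omega^3_X\otimes\mathcal L^{2}\), as in Section~\ref{sec:legendrian}. It vanishes nowhere, so
\[
K_X\simeq\mathcal L^{-2},\qquad\text{equivalently}\qquad \mathcal L^{2}\simeq K_X^{-1}.
\]
The one-form \(\theta\) also gives an injection \(\mathcal L^{-1}\hookrightarrow\Omega^1_X\). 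Since \(\mathcal L^{-2}=K_X\) is ample, \(\mathcal L^{-1}\) is ample as well.

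\textbf{Step 3: exclude an ample subsheaf of \(\Omega^1_X\).} I would invoke Demailly's theorem (\emph{On the Frobenius integrability of certain holomorphic \(p\)-forms}): on a compact Kähler manifold, if \(\theta\) is a holomorphic section of \(\Omega^1\otimes\mathcal L\) and \(\mathcal L^{-1}\) is pseudoeffective, then \(\ker\theta\) is integrable. Integrability means \(\theta\wedge d\theta=0\), which contradicts the contact condition.

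To reach the complex setting, \(X\), \(\mathcal H\) and \(\theta\) are defined over a finitely generated subfield \(K_0\subset K\) of characteristic zero. Embed \(K_0\hookrightarrow\mathbf C\). The complex fiber \(X_{\mathbf C}\) is a smooth projective threefold with ample canonical bundle, and it carries a contact structure with \(\mathcal L^{-1}\) ample, hence pseudoeffective. This contradicts Demailly's theorem.

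\textbf{Main obstacle.} The delicate point is this descent-and-transfer step. One must check that the contact condition and the ampleness of \(K_X\) both spread out over a model of finite type over \(\mathbf Q\) and specialize to a complex point. Both are open conditions: nonvanishing of a section, and ampleness in a smooth projective family. So this should go through after shrinking the base.

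An alternative would avoid transcendental input. One could try Miyaoka's generic semipositivity, which fails for an ample subsheaf \(\mathcal L^{-1}\) of \(\Omega^1\) only when the variety is uniruled. A variety with ample \(K_X\) is not uniruled, but that route is more roundabout. I would therefore rely on Demailly's theorem over \(\mathbf C\) via the Lefschetz principle.
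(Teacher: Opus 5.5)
Your proposal is correct and follows the same route as the paper: algebraize by proper GAGA, descend to a finitely generated field \(K_0\), embed \(K_0\) in \(\mathbf C\), and invoke Demailly. The only difference is that you unpack his Corollary~2 yourself, computing \(K_X\simeq\mathcal L^{-2}\) and applying his integrability theorem to the \(\mathcal L\)-valued form \(\theta\), whereas the paper cites the corollary directly. Your ``main obstacle'' is milder than you fear. Embedding \(K_0\) itself into \(\mathbf C\) is a field extension, and smoothness, ampleness, and nonvanishing of \(\theta\wedge d\theta\) are automatically preserved under it, so no spreading out or shrinking of a base is needed.
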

\begin{proof}
An algebraic contact distribution and its quotient line bundle, together
with \(X\), descend to a finitely generated subfield \(K_0\subset K\).
Smoothness, the contact condition, and ampleness descend under field
extension. An embedding \(K_0\hookrightarrow\mathbf C\) would thus
give a smooth complex projective contact threefold with ample canonical
bundle, contrary to \cite[Corollary~2]{Demailly}.
An analytic contact subbundle on \(X^{\an}\), its quotient, and the
inclusion morphism algebraize by proper GAGA. Exactness, local freeness,
and nondegeneracy of the bracket can be checked after analytification,
so the algebraized subbundle would be a contact structure on \(X\).
\end{proof}

\section*{Funding}
This work was supported by Proyecto PAPIIT, Direcci\'on General de Asuntos del Personal Acad\'emico, Universidad Nacional Aut\'onoma de M\'exico [grant number IN103324].

\section*{Acknowledgments}
I used ChatGPT (OpenAI) and Claude (Anthropic) for editorial revision, mathematical discussion, and checks of arguments and calculations.  I am responsible for the mathematical statements, proofs, and references.

\bigskip
\noindent
\textsc{Alberto Verjovsky}\\
Instituto de Matem\'aticas, Universidad Nacional Aut\'onoma de M\'exico\\
Unidad Cuernavaca, M\'exico\\
\texttt{albertoverjovsky@gmail.com}

\end{document}